\documentclass[10pt]{article}
\usepackage{times}

\DeclareMathVersion{varnormal}
\DeclareMathVersion{varbold}

\SetSymbolFont{letters}{normal}{OML}{txmi}{m}{it}
\SetSymbolFont{letters}{bold}{OML}{txmi}{bx}{it}
\SetSymbolFont{letters}{varnormal}{OML}{mdugm}{m}{it}
\SetSymbolFont{letters}{varbold}{OML}{mdugm}{b}{it}
\SetSymbolFont{operators}{normal}{OT1}{txr}{m}{n}
\SetSymbolFont{operators}{bold}{OT1}{txr}{bx}{n}
\SetSymbolFont{operators}{varnormal}{OT1}{mdugm}{m}{n}
\SetSymbolFont{operators}{varbold}{OT1}{mdugm}{b}{n}
\SetSymbolFont{symbols}{normal}{OMS}{txsy}{m}{n}
\SetSymbolFont{symbols}{bold}{OMS}{txsy}{bx}{n}
\SetSymbolFont{symbols}{varnormal}{OMS}{mdugm}{m}{n}
\SetSymbolFont{symbols}{varbold}{OMS}{mdugm}{b}{n}
\SetSymbolFont{largesymbols}{normal}{OMX}{txex}{m}{n}
\SetSymbolFont{largesymbols}{bold}{OMX}{txex}{bx}{n}
\SetSymbolFont{largesymbols}{varnormal}{OMX}{mdugm}{m}{n}
\SetSymbolFont{largesymbols}{varbold}{OMX}{mdugm}{b}{n}

\SetMathAlphabet{\mathrm}{normal}{OT1}{txr}{m}{n}
\SetMathAlphabet{\mathrm}{bold}{OT1}{txr}{bx}{n}
\SetMathAlphabet{\mathrm}{varnormal}{OT1}{mdugm}{m}{n}
\SetMathAlphabet{\mathrm}{varbold}{OT1}{mdugm}{b}{n}

\SetMathAlphabet{\mathit}{normal}{OT1}{txr}{m}{it}
\SetMathAlphabet{\mathit}{bold}{OT1}{txr}{bx}{it}
\SetMathAlphabet{\mathit}{varnormal}{OT1}{mdugm}{m}{it}
\SetMathAlphabet{\mathit}{varbold}{OT1}{mdugm}{b}{it}

\usepackage{amsmath}
\usepackage{amsthm}
\usepackage{mathrsfs}
\usepackage{amssymb}
\usepackage{changepage}   
\usepackage[ruled]{algorithm2e}
\usepackage{program}
\usepackage{empheq}
\usepackage{subcaption}
\usepackage[paper=a4paper,top=100pt,left=100pt,right=100pt,bottom=100pt,foot=50pt]{geometry}
\usepackage{titlesec}
\usepackage{lineno}
\usepackage{hyperref}
\usepackage{nicefrac}
\usepackage{float}
\usepackage{upgreek}
\usepackage[dvipsnames,table]{xcolor}
\usepackage{verbatim}
\usepackage{listings}
\usepackage{mdframed}
\usepackage[T1]{fontenc}

\titleformat{\section}{\normalfont\bfseries}{\thesection}{1em}{}
\titleformat{\subsection}{\normalfont}{\thesubsection}{1em}{}
\titleformat{\subsubsection}{\normalfont\it}{\thesubsubsection}{1em}{}

\renewcommand\appendix{\par
  \setcounter{section}{0}
  \setcounter{subsection}{0}
  \setcounter{figure}{0}
  \setcounter{table}{0}
  \setcounter{equation}{0}
  \setcounter{algocf}{0} 
  \setcounter{lstlisting}{0}
  \renewcommand\thesection{Appendix \Alph{section}}
  \renewcommand\thefigure{\Alph{section}.\arabic{figure}}
  \renewcommand\thetable{\Alph{section}.\arabic{table}}
  \renewcommand\theequation{\Alph{section}.\arabic{equation}}
  \renewcommand{\thealgocf}{\Alph{section}.\arabic{algocf}}
  \renewcommand{\thelstlisting}{\Alph{section}.\arabic{lstlisting}}
  \renewcommand\theHsection{Appendix \Alph{section}}
  \renewcommand{\theHlstlisting}{\theHsection.\arabic{lstlisting}}
}

\definecolor{cmtcolor}{RGB}{128,128,128}
\definecolor{kwdcolor}{RGB}{180,20,128}
\definecolor{strcolor}{RGB}{0,120,80}

\newcommand\digitstyle{\color{kwdcolor2}}
\makeatletter
\newcommand{\ProcessDigit}[1]
{%
  \ifnum\lst@mode=\lst@Pmode\relax%
   {\digitstyle #1}%
  \else
    #1%
  \fi
}

\definecolor{hltcolor}{RGB}{235,120,60}

\theoremstyle{plain}
\newtheorem{theorem}{Theorem}[section]
\newtheorem{lemma}[theorem]{Lemma}
\newtheorem{corollary}[theorem]{Corollary}
\newtheorem{thmRule}[theorem]{Rule}
\newtheorem{definition}[theorem]{Definition}

\newtheoremstyle{myremark}
    {\dimexpr\topsep/2\relax} 
    {\dimexpr\topsep/2\relax} 
    {}          
    {}          
    {\bfseries\itshape} 
    {.}         
    {.5em}      
    {}          
\theoremstyle{myremark}
\newtheorem{remark}{Remark}

\def\ttt#1;{\texttt{#1}} 

\def\bgnEqn{\begin{equation}}
\def\endEqn{\end{equation}}

\title{\large Dekker's floating point number system and compensated summation algorithms}

\author
{
  \small Longfei Gao\thanks{Email address: longfei.gao@anl.gov} \\
  {
    \it \small Argonne National Laboratory 
  } \\
  \small Frimpong Baidoo\thanks{Email address: fabaidoo@utexas.edu} \\
  {
    \it \small Oden Institute for Computational Sciences and Engineering, University of Texas at Austin
  }
}

\date{}

\begin{document}

\maketitle

\begin{abstract}
The recent hardware trend towards reduced precision computing has reignited the interest in numerical techniques that can be used to enhance the accuracy of floating point operations beyond what is natively supported for basic arithmetic operations on the hardware.
In this work, we study the behavior of various compensated summation techniques, which can be used to enhance the accuracy for the summation operation, particularly in situations when the addends are not known \textit{a priori}.
Complete descriptions of the error behavior are provided for these techniques. 
In particular, the relationship between the intermediate results at two consecutive summing steps is provided, which is used to identify the operation that limits accuracy and guide the design of more nuanced techniques.
The analysis relies on the work of Dekker \cite{dekker1971floating}, which uses a special floating point number system that does not require uniqueness in the number representation.
Despite this seemingly strange attribute, Dekker's system is very convenient for the analysis here and 
allows general statements to be expressed succinctly compared to a number system that requires uniqueness.
To prepare the foundation for discussion, we start by giving a thorough exhibition of Dekker's number system and supply the details that were omitted in \cite{dekker1971floating}. 
Numerical examples are designed to explain the inner workings of these compensated summation techniques, illustrate their efficacy, and empirically verify the analytical results derived.
Discussions are also given on application scenarios where these techniques can be beneficial. 
\end{abstract}

\section{Introduction}
In recent years, because of the drastically increasing demand from artificial intelligence (AI) and machine learning (ML) applications, the computing 
industry has gravitated towards data formats narrower than the IEEE double precision format that most computational scientists and engineers are accustomed to.
As a result, there has been a renewed interest in techniques that can enhance the accuracy of numerical operations. 
Iterative refinement \cite{carson2017new,carson2018accelerating,haidar2018harnessing} and matrix multiplications with multi-word arithmetic \cite{henry2019leveraging,fasi2023matrix,ootomo2024dgemm} are among the most notable examples of such techniques.
%

In this work, we focus on the compensated summation algorithms, which can be viewed as techniques that enhance the accuracy of the summation operation, but can also be applied more flexibly in situations such as ODE or PDE simulations. 
These techniques date back to at least the 60s \cite{kahan1965pracniques,moller1965quasi}.
%
The building blocks for the compensated summation algorithms are error free transformations (EFTs) over addition.
EFTs have often been attributed to Dekker's work \cite{dekker1971floating}, which describes multi-word arithmetic for basic floating point operations.
Although Dekker did not coin the term EFT in \cite{dekker1971floating}, the concept 
has been broadly adopted and found success in many areas \cite{quinlan1994round,shewchuk1997adaptive,chowdhary2022fast}.

Two EFTs over addition were given in \cite{dekker1971floating}, referred to hereafter as the 3op version and the 6op version, respectively, named after how many floating point operations are involved. 
The 3op version requires certain ordering on the two addends to work,
whereas the 6op version works for all number combinations in Dekker's floating point number system. 
Dekker's work \cite{dekker1971floating} employs a unique floating point number system that does not require uniqueness in the number representation, which turns out to be very convenient for the analysis and allows more insights.
Moreover, the results derived from Dekker's system can be transferred to the more familiar IEEE system with minimal effort since both systems address the same set of numbers.

Despite being instrumental, Dekker's work \cite{dekker1971floating} lacks the detail with several key results stated without proof, e.g., 
Lemma \ref{lemma:relationBetweenFloatingPointRoundingAndMantissaIntegerRounding} 
and
Theorem \ref{thm:6opEFTadditionDekker} below. 
For this reason, we revisit Dekker's system in Section \ref{sec:FloatingPointNumberSystem} of this work, supply the missing details, and provide rigorous proofs for a few useful results regarding rounding and addition. 
The comparison between Dekker's system and the more familiar IEEE system is also given, along with their links.
Detailed proofs for the 3op and 6op EFTs over addition stated in Dekker's system are then given in Section \ref{sec:EFTunderAddition}, again, filling in the missing details from \cite{dekker1971floating} and covering the various corner cases. 
Moreover, subnormal numbers are natually included in the analysis here, which are often neglected in earlier works such as \cite{moller1965quasi,linnainmaa1974analysis,knuth1997art_vol2}. 
These details are becoming increasingly important as adaptations or new designs of algorithms are being called upon to accommodate 
emerging hardware that may feature non-traditional data formats,
special rounding modes, 
and mixed-precision instructions.

The next portion of this work applies the derived EFT results to the analysis of compensated summation algorithms. 
The classical error analysis from \cite{knuth1997art_vol2} and \cite{goldberg1991every} is rather cumbersome and error-prone, which also tracks only the leading order term and addresses only the 3op-compensated recursive summation. 
Its extension to the 6op-compensated recursive summation is extremely difficult.
Instead, we provide a cleaner analysis that makes use of the EFT property and give a complete description of the error behavior of the 6op-compensated recursive summation algorithm in Section \ref{sec:sumSequence6op}. 
One particularly appealing aspect of our analysis is the derivation of the relation between the intermediate results at two neighboring steps, which reveals the pattern of error propagation and identifies the operation that limits the accuracy of the overall algorithm. 
More nuanced compensated summation algorithms are then given in Sections \ref{sec:sumSequenceDouble6op} and \ref{sec:sumSequenceTriple6op}, which remove this bottleneck.
These more nuanced versions are also analyzed using the techniques developed here.
Full descriptions of their error behaviors are provided, including the error propagation patterns.
Aside from the improved accuracy, the small error bounds equipped with these advanced versions of compensated summation algorithms can also be leveraged in computer system validation tasks as explained in Section \ref{example:accumulation}.

Numerical examples are provided in the final portion of this work.
A pedagogical example is designed to help the readers understand the inner workings of these compensated summation algorithms as well as how their behaviors differ in certain situations.
%
A second example compares the accuracy of the various recursive summation algorithms, illustrating the benefit of compensation and verifying empirically that the advanced versions can achieve an accuracy as if a data format with twice the amount of mantissa bits has been used in the regular recursive summation.
%
%
A third example is given in the context of dynamical system simulation using a classical three-body configuration that admits a figure-eight shaped periodic orbit. 
It is shown that the compensated algorithms significantly improve the orbit stability in simulations.
%

The remainder of this work is outlined as follows: In Section \ref{sec:FloatingPointNumberSystem}, we describe and analyze Dekker's floating point number system; In Section \ref{sec:EFTunderAddition}, we give detailed proof for the 3op and 6op EFTs over addition; In Section \ref{sec:compensatedSum}, we analyze the various compensated summation algorithms; In Section \ref{sec:numericalExamples}, numerical examples are provided; In Section \ref{sec:conclusions}, conclusions are drawn.

\section{Floating point number system}
\label{sec:FloatingPointNumberSystem}

In Dekker's work \cite{dekker1971floating}, the author presented a few techniques for error free transformation and multi-word arithmetic.
%
Dekker used a somewhat special floating point number system that does not require uniqueness in the number representation, which turns out to be quite convenient in proving results and making statements more general.
In the following, we first take a close look at Dekker's system, which we rely on heavily in this work. 

\subsection{Numbers and representations}
Elements of Dekker's floating point number system have the following form:
\bgnEqn
\label{eqn:DekkerNumberRepresentation}
x = m(x) \times \beta^{e(x)},
\endEqn
where $m(x) \in \mathbb Z$ 
denotes the mantissa of $x$
and $e(x) \in \mathbb Z$ 
denotes the exponent of $x$, with $\mathbb Z$ being the set of integers.
Dekker's floating point number system is defined as 
\bgnEqn
\label{eqn:DekkerFloatingPointSystem}
\left\{ x \,\vert\, x = m(x) \times \beta^{e(x)},\, \vert m(x)\vert < M,\, E_{\min} \leq e(x) \leq E_{\max} \right\},
\endEqn
where $\beta$, $M$, $E_{\min}$, and $E_{\max}$ are integers with $\beta>1$ and $M>0$. 
Denoting the number of mantissa digits as $t$, we assume $M = \beta^t$ below, which is natural.

Our primary interest is in the binary floating point number system, i.e., $\beta=2$. 
However, we will keep the discussion general with respect to the base $\beta$ and make it explicit when we specialize to the case $\beta = 2$.
Even when we specialize to the case $\beta=2$, we may still use $\beta$ to denote the base because it is immediately apparent that we are referring to the base of the system, whereas the number $2$ may come from other contexts.

Before proceeding, we comment on the subtle distinction between a {\it number} and a {\it representation} of a number here. 
A number is a unique entity in the number system, which can have multiple representations, e.g., $\nicefrac{1}{2}$, $0.5$, and $5 \times 10^{-1}$ can all be used to represent the same number in the real number system.
Numbers can be compared to one another. 
We may also compare two representations, which really means comparing the underlying numbers that they represent. Specifically, all representations of the same number compare equal to one another.

In Dekker's system, a floating point number is allowed to have multiple representations.
To familiarize ourselves with Dekker's system, we may group the representations in it into different {\it bins} based on their exponent. 
For example, with $\beta=2$, the bin associated with an arbitrary exponent $E$, where $E_{\min} \leq E \leq E_{\max}$, is illustrated in \eqref{eqn:representationsWithExponentE-oneGroup}.
To simplify the discussion, we include only the positive numbers in \eqref{eqn:representationsWithExponentE-oneGroup} and similar illustrations because Dekker's system is symmetric.
Mantissas of these representations vary from $t$ digits of $0$'s to $t$ digits of $1$'s, which correspond to the smallest number $0$ and the largest number $(\beta^t-1) \times \beta^{E}$ representable in this bin, respectively.
%
%
\begin{equation}
\label{eqn:representationsWithExponentE-oneGroup}
\underbrace{0 \cdots 0}_{\text{$t$ digits}} \times \,\beta^{E}
\longleftrightarrow
\underbrace{1 \cdots 1}_{\text{$t$ digits}} \times \,\beta^{E}
\end{equation}
%

If $t>1$, representations in this bin can be divided into two groups as illustrated in \eqref{eqn:representationsWithExponentE-twoSubgroups}. 
The group of representations in 
\eqref{eqn:representationsWithExponentE-twoSubgroups:firstDigit=1} has the leading digit in mantissa taking value $1$ whereas 
the group 
in 
\eqref{eqn:representationsWithExponentE-twoSubgroups:firstDigit=0} has the leading digit in mantissa taking value $0$.
Representations with a non-zero leading digit in mantissa, e.g., those in 
\eqref{eqn:representationsWithExponentE-twoSubgroups:firstDigit=1}, are referred to as \textit{normal} whereas 
representations with a zero leading digit in mantissa, e.g., those in 
\eqref{eqn:representationsWithExponentE-twoSubgroups:firstDigit=0}, are referred to as \textit{subnormal}. 
With $\beta=2$, there are equal amount of normal and subnormal representations in each bin.
\begin{subequations}
\label{eqn:representationsWithExponentE-twoSubgroups}
\begin{align}
1\underbrace{0 \cdots 0}_{\text{$t-1$}} \times \,\beta^{E}
\longleftrightarrow
1\underbrace{1 \cdots 1}_{\text{$t-1$}} \times \,\beta^{E}
\label{eqn:representationsWithExponentE-twoSubgroups:firstDigit=1}
\\
0\underbrace{0 \cdots 0}_{\text{$t-1$}} \times \,\beta^{E}
\longleftrightarrow
0\underbrace{1 \cdots 1}_{\text{$t-1$}} \times \,\beta^{E}
\label{eqn:representationsWithExponentE-twoSubgroups:firstDigit=0}
\end{align}
\end{subequations}

Assuming $E-1 \geq E_{\min}$, representations in 
\eqref{eqn:representationsWithExponentE-twoSubgroups:firstDigit=0} admit equivalent representations as illustrated in \eqref{eqn:representationsWithExponentE:SubgroupFirstDigit=0:RepresentedWith(E-1)}, where all representations have one \textit{trailing} zero with the first $t-1$ digits of their mantissas vary from all $0$'s to all $1$'s, all with exponent $E-1$. 
\begin{equation}
\label{eqn:representationsWithExponentE:SubgroupFirstDigit=0:RepresentedWith(E-1)}
\underbrace{0 \cdots 0}_{\text{$t-1$}}0 \times \beta^{E-1}
\longleftrightarrow
\underbrace{1 \cdots 1}_{\text{$t-1$}}0 \times \beta^{E-1}
\end{equation}
On the other hand, representations in 
\eqref{eqn:representationsWithExponentE-twoSubgroups:firstDigit=1} do not admit equivalent representations with exponent $E-1$ in Dekker's system because of the constraint $\vert m(x) \vert < M = \beta^t$ from \eqref{eqn:DekkerFloatingPointSystem}.

Imagine a process in which we list all representations in the system from bin $E_{\min}$ to bin $E_{\max}$.
Based on the observation above, we have that with the exception of the first bin in this process where $E = E_{\min}$, 
for each new bin introduced subsequently, the normal representations are new numbers added to the system whereas the subnormal representations have already appeared in previously introduced bins.

Notice also that the group of representations from \eqref{eqn:representationsWithExponentE-twoSubgroups:firstDigit=0} can be further divided into two subgroups based on if the second leading digit is also zero. 
The subgroup with two \textit{leading} zeros also admits equivalent representations in bin $E-2$, assuming $E-2 \geq E_{\min}$, each with two \textit{trailing} zeros.
The overlapping pattern between different bins should be clear now and can be stated generally as follows:
Given $k \in \mathbb Z$, if $1 \leq k \leq t$ and $E-k \geq E_{\min}$, those representations in \eqref{eqn:representationsWithExponentE-oneGroup} with $k$ leading zeros admit equivalent representations with exponent $E-k$. 
The fraction of these representations in the entire $\beta^t$ representations of \eqref{eqn:representationsWithExponentE-oneGroup} is $\tfrac{1}{\beta^k}$.
In the case $k=t$, there is only one overlapping number, zero, between the two bins with exponents $E$ and $E-k$.
When $k>t$, there is no overlapping number between the two bins.

Based on the observation above, 
we have the following lemma on the uniqueness of normal representation.
\begin{lemma}[Normal representation is unique]
\label{lemma:NormalRepresentationIsUnique} 
A number from Dekker's system can admit at most one normal representation.\\
\normalfont
{\bf Proof.}
Suppose $m(a) \times \beta^{e(a)}$ and $m(b) \times \beta^{e(b)}$ are two distinct normal representations.
If $e(a) = e(b)$ but $m(a) \neq m(b)$, it is obvious that these two representations cannot represent the same number.

If $e(a) \neq e(b)$, without loss of generality, assume $m(a)>0$, $m(b)>0$, and $e(a) < e(b)$. 
On one hand, we have 
\[
m(a) \times \beta^{e(a)} < \beta^t \times \beta^{e(a)}
\]
following $m(a) < M$.
On the other hand, we have
\[
m(b) \times \beta^{e(b)} \geq \beta^{t-1} \times \beta^{e(b)} \geq \beta^t \times \beta^{e(a)}
\]
following the normal assumption.
%
Combining the two inequalities, we have $m(b) \times \beta^{e(b)} > m(a) \times \beta^{e(a)}$ and, thus, the two representations cannot possibly represent the same number.

\hfill {\bf End of proof for Lemma \ref*{lemma:NormalRepresentationIsUnique}.}
\end{lemma}

Not all numbers in Dekker's system admit normal representations. 
The most notable example is zero, which necessarily has all zero digits in the mantissa.
%
%
In fact, all numbers in the following set do not admit normal representations
\bgnEqn
\label{eqn:subnormalNumbers}
\left\{ x \,\vert\, x = m(x) \times \beta^{E_{\min}},\, \vert m(x)\vert < \beta^{t-1} \right\}.
\endEqn
%
%
\begin{definition}[Subnormal number]
A nonzero number is referred to as {\it subnormal} if it does not admit a normal representation in Dekker's system. 
\end{definition}

It is worth mentioning that in Dekker's system, subnormal numbers may still admit multiple 
representations. For example, with a three-digit mantissa system, both $010 \times \beta^{E_{\min}}$ and $001 \times \beta^{E_{\min}+1}$ represent the same subnormal number.

\begin{remark}[Subnormal numbers]
\label{rmk:subnormal_importance_and_practice}
%
Historically, with certain hardware and compilers, the default behavior regarding subnormal numbers is \textit{treat as zero} on input and \textit{flush to zero} on output, forfeiting the gradual underflow design from the IEEE standard. 
Operations involving subnormal numbers often incur heavy performance penalty as they are handled via microcode offloading. 
However, for reduced precision data formats, it is important to give subnormal numbers the proper treatment in hardware since they occupy a large portion of the number system and often cover an even larger portion of the representable range.
%
%
Interested readers may find more discussions on the merit of subnormal numbers in \cite[Appendix A]{overton2025numerical} and \cite{demmel1984underflow}.

\hfill {\bfseries{\itshape End of Remark} \ref*{rmk:subnormal_importance_and_practice}.}
\end{remark}

Following Lemma \ref{lemma:NormalRepresentationIsUnique}, one may impose uniqueness on the number system by keeping only the normal representations, i.e., by requiring $\vert m(x) \vert \geq \beta^{t-1}$.
This strategy, however, would leave an undesirable gap between zero and the smallest positive normal number $\beta^{t-1} \times \beta^{E_{\min}}$. 
%
%
This gap can be filled by including the subnormal numbers.
%
We refer to numbers in the resulting union shown in \eqref{eqn:protoIEEEfloatingPointSystem} as the {\it proto-IEEE} system. 
\bgnEqn
\label{eqn:protoIEEEfloatingPointSystem}
\left\{ x \,\vert\, x = m(x) \times \beta^{e(x)},\, \beta^{t-1} \leq \vert m(x) \vert < \beta^t,\, E_{\min} \leq e(x) \leq E_{\max} \right\} 
\union 
\left\{ x \,\vert\, x = m(x) \times \beta^{E_{\min}},\, \vert m(x) \vert < \beta^{t-1} \right\}
\endEqn

\begin{remark}[\texttt{Inf} and \texttt{NaN} in IEEE system]
\label{rmk:proto-IEEEnumberSystem}
The reason we refer to \eqref{eqn:protoIEEEfloatingPointSystem} as the proto-IEEE system, rather than simply the IEEE system, is that the IEEE system includes special members $\pm \texttt{Inf}$ and $\texttt{NaN}$'s as extensions to numbers. The former are used to handle overflow and the later are used to handle invalid arithmetic operations such as $\nicefrac{0}{0}$ and $\sqrt{-1}$.
In the IEEE system, representations with the largest exponent are dedicated to these extensions.

\hfill {\bfseries{\itshape End of Remark} \ref*{rmk:proto-IEEEnumberSystem}.}
\end{remark}

Our main interest in exploring Dekker's system is to understand the behavior of floating point numbers in numerical procedures, rather than their realization on hardware.
For our purpose, the analytical results obtained in Dekker's system are transferrable to the IEEE system. 
To give credence to this claim, 
%
we first establish the fact that Dekker's system and the proto-IEEE system contain the same numbers, which is provided in the following lemma using induction.
%
\begin{lemma}[Equivalence]
\label{lemma:equivalence_Dekker_proto-IEEE} 
The two number sets from Dekker's system \eqref{eqn:DekkerFloatingPointSystem} and the proto-IEEE system \eqref{eqn:protoIEEEfloatingPointSystem} are equivalent.
\\
\normalfont
{\bf Proof.}
Imagine a process of adding numbers to these two sets bin by bin.
For the first bin $E_{\min}$, Dekker's system has the following numbers 
\begin{equation}
\label{eqn:equivalence:Dekker:binEmin}
\left\{ x \,\vert\, x = m(x) \times \beta^{E_{\min}},\, \vert m(x)\vert < \beta^t \right\};
\end{equation}
the proto-IEEE system has the following numbers
\begin{equation}
\label{eqn:equivalence:proto-IEEE:binEmin}
\left\{ x \,\vert\, x = m(x) \times \beta^{E_{\min}},\, \beta^{t-1} \leq \vert m(x) \vert < \beta^t \right\} 
\union 
\left\{ x \,\vert\, x = m(x) \times \beta^{E_{\min}},\, \vert m(x) \vert < \beta^{t-1} \right\}.
\end{equation}
Clearly the two sets \eqref{eqn:equivalence:Dekker:binEmin} and \eqref{eqn:equivalence:proto-IEEE:binEmin} are the same.
Assuming the two systems contain the same numbers up to bin $E$.
For bin $E+1$, Dekker's system introduces the following numbers
\begin{equation}
\label{eqn:equivalence:binE+1}
\left\{ x \,\vert\, x = m(x) \times \beta^{E+1},\, \beta^{t-1} \leq \vert m(x)\vert < \beta^t \right\} 
\union 
\left\{ x \,\vert\, x = m(x) \times \beta^{E+1},\, \vert m(x)\vert < \beta^{t-1} \right\}.
\end{equation}
The first set of \eqref{eqn:equivalence:binE+1} is the same as the bin $E+1$ introduced by the proto-IEEE system;
the second set of \eqref{eqn:equivalence:binE+1} can be written as
$
\left\{ x \,\vert\, x = \big( m(x) \cdot \beta \big) \times \beta^{E},\, \vert m(x) \cdot \beta \vert < \beta^{t} \right\},
$
which has already been introduced by the previous bins. 
Therefore, the two systems contain the same numbers up to bin $E+1$.
By induction, the result stated in the lemma follows.

\hfill {\bf End of proof for Lemma \ref*{lemma:equivalence_Dekker_proto-IEEE}.}
\end{lemma}

To better appreciate the number systems, let us consider a concrete example where $\beta=2$, $t=3$, $E_{\min} = -3$, and $E_{\max} = 0$. 
Positive numbers that can be represented by this set of parameters are illustrated in Figure \ref{fig:number_system_example:entire_distribution}.
Notice that the distance between two consecutive numbers increases at the bin boundaries; specifically, it leaps by factor of $\beta$.
\begin{figure}[H]
\captionsetup{width=1\textwidth, font=small, labelfont=small}
\centering
\includegraphics[width=1\textwidth]{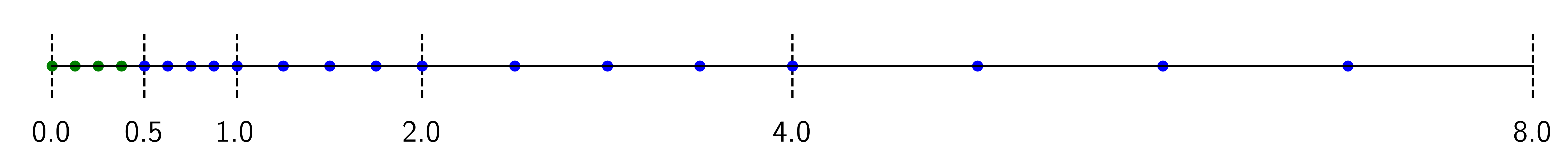}
\caption{Number distribution for a system with $\beta=2$, $t=3$, $E_{\min} = -3$, and $E_{\max} = 0$. The green and blue dots indicate subnormal and normal numbers, respectively.}
\label{fig:number_system_example:entire_distribution}
\end{figure}

Representations of these numbers in the proto-IEEE system grouped by bins are illustrated in Figure \ref{fig:number_system_example:proto_IEEE_bin_by_bin}. 
For the bin with exponent $E>E_{\min}$, the $\beta^{t-1}$ representations uniformly divide the interval $\big[\beta^{E+t-1}, \beta^{E+t}\big)$ with increment $\beta^E$.
%
Additionally, for the bin with exponent $E_{\min}$, the $\beta^{t}$ representations uniformly divide the interval $\big[0, \beta^{E_{\min} + t}\big)$ with increment $\beta^E_{\min}$. 
Half of the representations in bin $E_{\min}$ correspond to subnormal numbers.
\begin{figure}[H]
\captionsetup{width=1\textwidth, font=small, labelfont=small}
\centering
\includegraphics[width=1\textwidth]{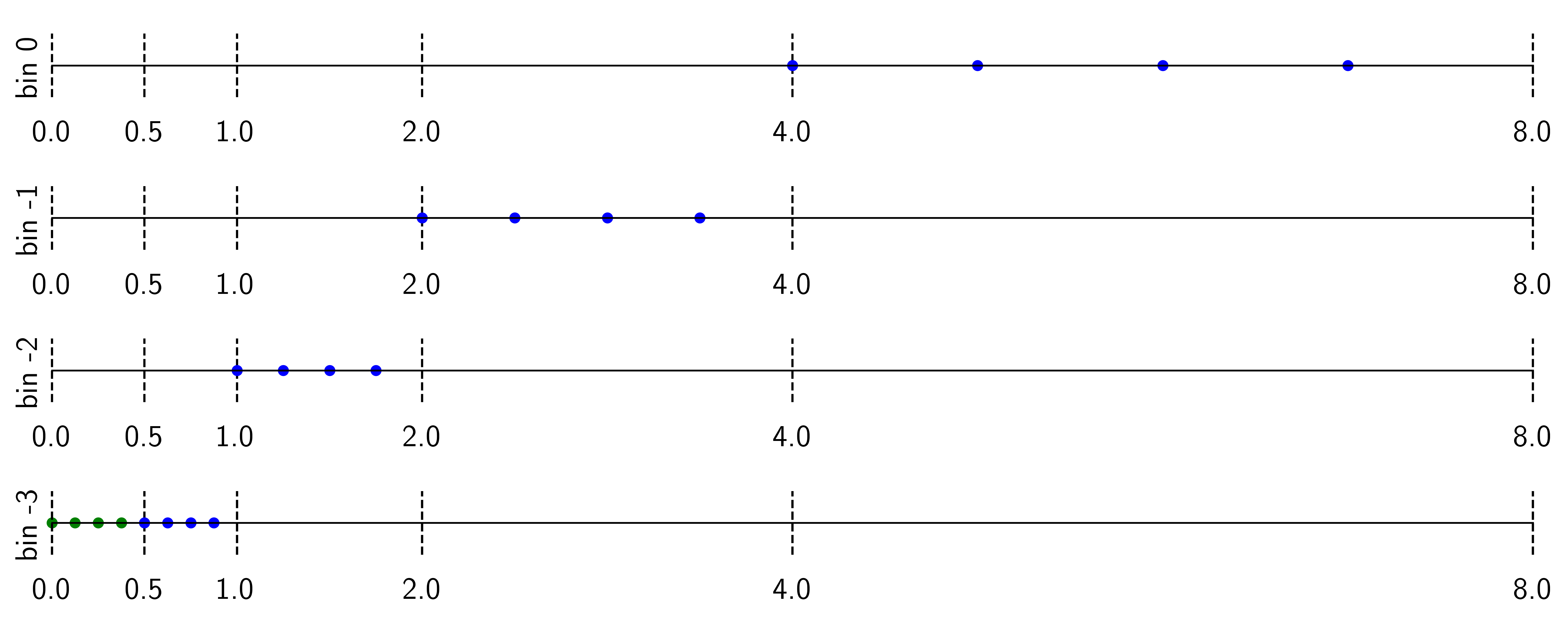}
\caption{Representations in the proto-IEEE system with $\beta=2$, $t=3$, $E_{\min} = -3$, and $E_{\max} = 0$, viewed bin by bin.}
\label{fig:number_system_example:proto_IEEE_bin_by_bin}
\end{figure}

In comparison, representations of these numbers in Dekker's system grouped by bins are illustrated in Figure \ref{fig:number_system_example:Dekker_bin_by_bin}.
For the bin with exponent $E$, the $\beta^t$ representations uniformly divide the interval $\big[0,\beta^{E+t}\big)$ with increment $\beta^E$.
Unlike the proto-IEEE system, bin $E_{\min}$ and the subnormal numbers does {\it not} need to be addressed specially in Dekker's system.
\begin{figure}[H]
\captionsetup{width=1\textwidth, font=small, labelfont=small}
\centering
\includegraphics[width=1\textwidth]{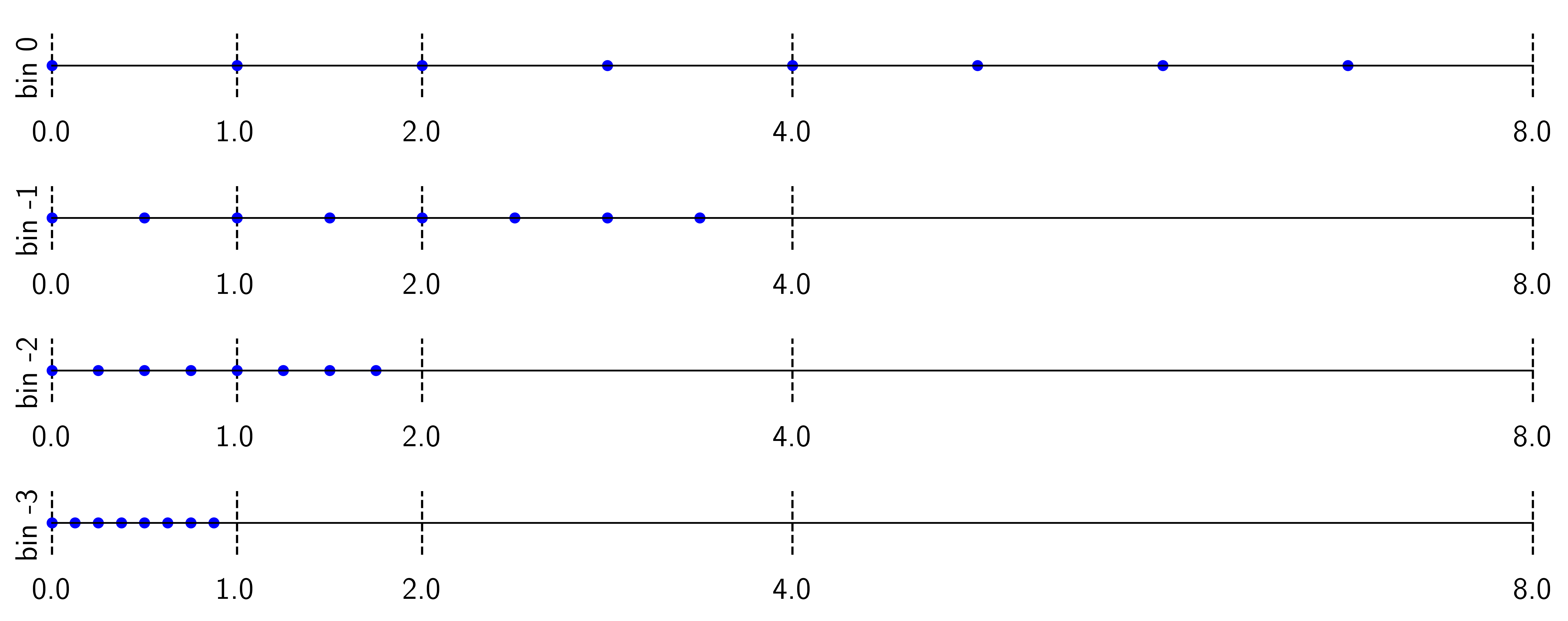}
\caption{Representations in Dekker's system with $\beta=2$, $t=3$, $E_{\min} = -3$, and $E_{\max} = 0$, viewed bin by bin.}
\label{fig:number_system_example:Dekker_bin_by_bin}
\end{figure}

In the context of this work, statements can often be made more general in Dekker's system than in the proto-IEEE system and offer more insights. Theorem \ref{thm:3opEFTadditionDekker} is one example, whose generality is exploited in the proof of Theorem \ref{thm:6opEFTadditionDekker}. 
Dekker's system can also offer simplicity because statements do not need to be specialized to subnormal numbers with Lemma \ref{lemma:relationBetweenFloatingPointRoundingAndMantissaIntegerRounding} being an example.
Lemma \ref{lemma:FergusonCounterpart} offers another example that illustrates the convenience and effectiveness of Dekker's system.

\begin{remark}[Nonuniqueness]
\label{rmk:nonuniqueness}
Admitting nonunique representations in a number system may appear uncomfortable at first sight, but in fact, even the IEEE system contains nonunique representations in the special case of \textit{zero}, which has two representations with different sign bits. 
In Dekker's system, there are two such zeros in each bin.
The IEEE standard \cite[Section 6.3]{IEEE2019standard} defines special rules to determine the sign of zero when ambiguity may arise. However, these are rather technical aspects that do not concern this work.

\hfill {\bfseries{\itshape End of Remark} \ref*{rmk:nonuniqueness}.}
\end{remark}

\subsection{Arithmetic and rounding}
In addition to the numbers and representations, a floating point number system $\mathbb F$ is usually equipped with a set of arithmetic operations.
Definitions of these arithmetic operations can be derived from their counterparts in the real number system $\mathbb R$.
However, $\mathbb F$ is usually not closed with respect to the arithmetic operations defined in $\mathbb R$.
For this reason, some mechanism is needed to close the floating point number system with respect to 
arithmetic operations. 
For 
floating point number systems, this can be {\it mostly} achieved by the operation of rounding with the possible exception of overflow.

Rounding maps $x \in \mathbb R$ to $fl(x) \in \mathbb F$ where $fl(x)$ is close to $x$. 
In Dekker's system, when $x$ overflows, i.e., falls outside the range of $\mathbb F$, rounding maps it to the nearest element in $\mathbb F$, i.e., one of $\pm (M-1) \times \beta^{E_{\max}}$ depending on the sign of $x$. 
This is sufficient to close the system for valid arithmetic operations defined in $\mathbb R$.
One may refer to the handling of overflow in Dekker's system as {\it clipping} or {\it clamping}, which as a concept appears often in computer engineering practice. 
In contrast, the IEEE system handles overflow with $\pm \texttt{Inf}$.

The floating point arithmetic operations defined over a floating point number system $\mathbb F$ are simply their counterparts in the real system $\mathbb R$ followed by the rounding operation.
Two common notions pertaining to rounding are given below. 
\begin{definition}[Faithful]
For $x \in \mathbb R$, rounding is faithful if $fl(x) \in \mathbb F$ is either the largest element of $\mathbb F$ no larger than $x$ or the smallest element of $\mathbb F$ no smaller than $x$. As a consequence, if $x \in \mathbb F$, then $fl(x) = x$ under faithfulness.
\end{definition}
\begin{definition}[Optimal]
For $x \in \mathbb R$, rounding is optimal if $fl(x) \in \mathbb F$ is the nearest element to $x$. 
Optimal rounding may also be referred to as nearest rounding.
\end{definition}
\noindent

When $x$ lies at the exact middle of two consecutive members of $\mathbb F$, a tie-breaking criterion is needed. 
%
%
Possible tie-breaking criterions include rounding to even (or odd) last mantissa digit and rounding away from (or towards) zero.

\begin{remark}[Tie-breaking]
\label{rmk:tie-breaking-ambiguity}

Nearest rounding with the tie-breaking criterion of rounding to even last mantissa digit is the default rounding mode for binary formats in the IEEE standard \cite[Section 4.3]{IEEE2019standard} and is prevalently supported on hardware.
It may not always be possible to round to a number with even last mantissa digit such as in the extreme case when $t=1$. A fallback criterion such as rounding away from zero may be used in such situations as suggested by the IEEE standard.

In Dekker's system, rounding to even or odd last mantissa digit can be ambiguous because the same number may admit representations with both even and odd last mantissa digit, e.g., both $100 \times \beta^{-2}$ and $001 \times \beta^{0}$ 
from the system illustrated in Figure \ref{fig:number_system_example:Dekker_bin_by_bin} 
represent the same number.
To remove this ambiguity, given $x \in \mathbb R$, tie-breaking should be performed on a representation of $x$ that has $t$ whole digits either with a non-zero leading digit or with exponent $E_{\min}$, possibly both. 

Using the illustration in Figure \ref{fig:number_system_example:Dekker_bin_by_bin}, this simply means when $x$ can be enclosed in multiple intervals $\left[000, 111\right) \times \beta^E$, where $E_{\min} \leq E \leq E_{\max}$, tie-breaking, and rounding in general, should be performed on a representation of $x$ with the smallest exponent $E$ possible.
This procedure will also deliver the rounding outcome consistent with the proto-IEEE system under nearest rounding with the same tie-breaking criterion.

In the following, when referring to tie-breaking with rounding to even or odd in Dekker's system, the above procedure is implied and, more generally, the consistency of rounding outcome between Dekker's system and the proto-IEEE system is assumed.
For tie-breaking criterions that bias towards a direction, there is no ambiguity in Dekker's system and the rounding outcome is consistent between the two systems.

\hfill {\bfseries{\itshape End of Remark} \ref*{rmk:tie-breaking-ambiguity}.}
\end{remark}

All commonly encountered rounding modes are at least faithful. 
%
%
In the following, the rounding mode, including the tie-breaking criterion if needed, is assumed to be fixed throughout the procedure under discussion and deterministic 
so that the rounding outcome is unique and depends on the input only.
To facilitate the discussion below, we assume that the following rule holds for floating point arithmetic operations.
\begin{thmRule}[The {\it as if} rule] 
\label{Rule:AsIf} 
The floating point arithmetic operation under consideration are performed {\it as if} an intermediate result correct to infinite precision and with unbounded range has been produced first, and then rounded to fit the destination format.
\end{thmRule}

%
\noindent Rule \ref{Rule:AsIf} simplifies the discussion because it decouples rounding from the arithmetic operations and, thus, enables statements regarding rounding behavior to be made without specializing to a particular arithmetic operation.
%
%
This rule is stipulated in the IEEE-754 standard \cite{IEEE2019standard} for addition, subtraction, multiplication, division, square root, and fused multiply add (FMA), and is widely supported on modern hardware.

It is easy to see that under Rule \ref{Rule:AsIf} and with consistent rounding modes, floating point arithmetic operations in Dekker's system and the proto-IEEE system deliver the same outcome since, first, the calculation of the infinitely precise intermediate result can be understood as being carried out in $\mathbb R$, i.e., independent of the floating point representation, and, second, the subsequent rounding outcome depends only on the rounding mode and number distribution in the system, which is the same according to Lemma \ref{lemma:equivalence_Dekker_proto-IEEE}.

\subsection{Preparatory results on rounding}

Below, we provide a few lemmas on rounding that will be needed in the subsequent discussion.

\begin{lemma}[Symmetry] 
\label{lemma:Symmetry} 
Assume $\mathbb F$ is symmetric, i.e., 
the negative of any element of $\mathbb F$ is also in $\mathbb F$.
Then $\forall x \in \mathbb R$, we have $fl(-x) = -fl(x)$ under nearest rounding with a tie-breaking criterion that is consistent between the rounding of $-x$ and $x$. 
Such tie-breaking criteria include rounding to even or odd last mantissa digit or rounding towards or away from zero.
\\
\normalfont
{\bf Proof.}
For the special case $x=0$, we have $fl(x) = 0$ since $0 \in \mathbb F$ and the relation $fl(-x) = -fl(x)$ clearly holds. 
When $x \neq 0$, we need only to consider the case $x>0$ because both $\mathbb R$ and $\mathbb F$ are symmetric.
In this case, we have 
\[
\vert fl(x) - x \vert \leq \vert a - x \vert,\, \forall a \in \mathbb F
\]
under nearest rounding, and hence
\[
\left\vert \big(-fl(x)\big) - (-x) \right\vert \leq \vert (-a) - (-x) \vert,\, \forall (-a) \in \mathbb F.
\]
In other words, $-fl(x)$ is either \textit{the} closest element in $\mathbb F$ to $-x$, in which case we have $fl(-x) = -fl(x)$ by the definition of nearest rounding, or one of the two closest elements when $-x$ lies at the exact middle of two adjacent elements of $\mathbb F$.
In the later scenario, $fl(-x) = -fl(x)$ follows the consistency in the tie-breaking criterion.

\hfill {\bf End of proof for Lemma \ref*{lemma:Symmetry}.}
\end{lemma}

We note here that Lemma \ref{lemma:Symmetry} does not hold under tie-breaking criteria such as rounding towards \texttt{-Inf} or \texttt{+Inf}.

\begin{lemma}[Monotonicity]
\label{lemma:Monotonicity}
For $x, y \in \mathbb R$, if $\vert x \vert \leq \vert y \vert$, then $\vert fl(x) \vert \leq \vert fl(y) \vert$ under nearest rounding. \\
\normalfont
{\bf Proof.}
We need only to consider the case $0 \leq x \leq y$ since, according to Lemma \ref{lemma:Symmetry}, rounding is symmetric under nearest rounding. 
%
Moreover, if $x=y$, then $fl(x)=fl(y)$ since rounding leads to unique outcome. The result stated in this lemma clearly holds in this case.
Therefore, we can further restrict ourselves to considering only $0 \leq x < y$.

By the definition of nearest rounding, we have 
\begin{subequations}
\label{lemma:Monotonicity:nearest}
\begin{align}
\vert fl(x) - x \vert & \leq \vert a - x \vert, \, \forall a \in \mathbb F; 
\label{lemma:Monotonicity:nearest->a} \\
\vert fl(y) - y \vert & \leq \vert b - y \vert, \, \forall b \in \mathbb F,
\label{lemma:Monotonicity:nearest->b}
\end{align}
\end{subequations}
and, consequently, 
\bgnEqn
\label{lemma:Monotonicity:nearestAddTogether}
\vert fl(x) - x \vert + \vert fl(y) - y \vert 
\leq 
\vert a - x \vert + \vert b - y \vert, \, \forall a,b \in \mathbb F.
\endEqn
We proceed by contradiction in the separate cases below. 

\begin{adjustwidth}{1.5em}{}
\setlength{\parindent}{0em}
\textcolor{kwdcolor}{Case 1:} if $fl(x) > fl(y) \geq x$, this \textcolor{kwdcolor}{contradicts with \eqref{lemma:Monotonicity:nearest->a}} as $fl(y)$ is strictly closer to $x$ than $fl(x)$.

\textcolor{kwdcolor}{Case 2(a):} if $fl(x) > fl(y)$, $fl(y) < x$, and $fl(x) \leq y$, this \textcolor{kwdcolor}{contradicts with \eqref{lemma:Monotonicity:nearest->b}} as $fl(x)$ is strictly closer to $y$ than $fl(y)$.

\textcolor{kwdcolor}{Case 2(b):} if $fl(x) > fl(y)$, $fl(y) < x$, and $fl(x) > y$, we have 
\begin{align*}
\big(y - fl(y)\big) + \big(fl(x) - x\big) 
& = \big(y - x + x - fl(y)\big) + \big(fl(x) - y + y - x\big) \\
& = 2\cdot(y-x) + \big(x - fl(y)\big) + \big(fl(x) - y\big) \\
& > \big(x - fl(y)\big) + \big(fl(x) - y\big)
\end{align*}
following $x<y$, which \textcolor{kwdcolor}{contradicts with \eqref{lemma:Monotonicity:nearestAddTogether}}.
\end{adjustwidth}
The above cases cover all possible positions of $fl(x)$ and $fl(y)$ in relation to $x$ and $y$, which all leads to contradiction if $fl(x) > fl(y)$. Therefore, the result stated in this lemma must hold.

\hfill {\bf End of proof for Lemma \ref*{lemma:Monotonicity}.}
\end{lemma}

\begin{lemma}
\label{lemma:floor&ceiling_are_the_closest_for_normal_numbers} 
If $\tilde z > 0 \in \mathbb R$ can be expressed as 
$\tilde z = \zeta \times \beta^{e(z)}$ with 
$\beta^{t-1} \leq \zeta \leq \beta^t$ and 
$E_{\min} \leq e(z) < E_{\max}$,
then the number in $\mathbb F$ closest to $\tilde z$ is either $\lfloor \zeta \rfloor \times \beta^{e(z)}$ or $\lceil \zeta \rceil \times \beta^{e(z)}$, 
where, for $\zeta > 0 \in \mathbb R$, $\lfloor \zeta \rfloor$ and $\lceil \zeta \rceil$ are defined as 
$\lfloor \zeta \rfloor = \max \{ i \, \vert \, i \in \mathbb Z, i \leq \zeta \}$ 
and
$\lceil  \zeta \rceil  = \min \{ i \, \vert \, i \in \mathbb Z, i \geq \zeta \}$, respectively.
\\
\normalfont
{\bf Proof.}
The conditions $\beta^{t-1} \leq \zeta \leq \beta^t$ and $E_{\min} \leq e(z) < E_{\max}$ ensure that both $\lfloor \zeta \rfloor \times \beta^{e(z)}$ and $\lceil \zeta \rceil \times \beta^{e(z)}$ are in $\mathbb F$.
Consider now an arbitrary $w>0$ in $\mathbb F$ with a representation $w = m(w) \times \beta^{e(w)}$. 

If $e(w) < e(z)$, we have $m(w) \times \beta^{e(w)} \leq (\beta^t - 1) \times \beta^{e(z)-1} < \beta^{t-1} \times \beta^{e(z)} \leq \lfloor \zeta \rfloor \times \beta^{e(z)} \leq \zeta \times \beta^{e(z)}$. Therefore, $\lfloor \zeta \rfloor \times \beta^{e(z)}$ is closer to $\tilde z$ than $w$ in this case.

If $e(w) \geq e(z)$, we can write $w = m(w) \cdot \beta^h \times \beta^{e(z)}$, 
where $h = e(w) - e(z) \geq 0$ and, therefore, $m(w) \cdot \beta^h \in \mathbb Z$.
Since $\lfloor \zeta \rfloor$ and $\lceil \zeta \rceil$ are consecutive integers, we have either 
$m(w) \cdot \beta^h \leq \lfloor \zeta \rfloor \leq \zeta$ or 
$m(w) \cdot \beta^h \geq \lceil \zeta \rceil \geq \zeta$.
Therefore, either $\lfloor \zeta \rfloor \times \beta^{e(z)}$ or $\lceil \zeta \rceil \times \beta^{e(z)}$ is closest to $\tilde{z}$ in this case.

\hfill {\bf End of proof for Lemma \ref{lemma:floor&ceiling_are_the_closest_for_normal_numbers}.}
\end{lemma}

We note here that in the statement of Lemma \ref{lemma:floor&ceiling_are_the_closest_for_normal_numbers}, $\lfloor \zeta \rfloor \times \beta^{e(z)}$ and $\lceil \zeta \rceil \times \beta^{e(z)}$ are referred to as {\it numbers} rather than {\it representations}.
Specifically, $\lceil \zeta \rceil \times \beta^{e(z)}$ is not necessarily a valid representation in $\mathbb F$, which happens when $\zeta > \beta^t - \tfrac{1}{2}$.
Moreover, Lemma \ref{lemma:floor&ceiling_are_the_closest_for_normal_numbers} can be easily extended to include real numbers $\left\{\tilde{z} \,\vert\, \tilde{z} = \zeta \times \beta^{E_{\max}},\, \beta^{t-1} \leq \zeta \leq \beta^t - 1 \right\}$ in the statement.
We opted for the weaker version for the clarity of its statement, which is sufficient for the subsequent proofs. 

%
%
Lemma \ref{lemma:floor&ceiling_are_the_closest_for_normal_numbers} covers the case where $\tilde z > 0$. 
The following corollary extends the result to $\tilde z < 0$.
It can be proved by applying Lemma \ref{lemma:floor&ceiling_are_the_closest_for_normal_numbers} to $- \tilde z$ and using the argument that $\mathbb R$ is symmetric with respect to zero.

\begin{corollary}
If $\tilde z < 0 \in \mathbb R$ can be expressed as 
$\tilde z = \zeta \times \beta^{e(z)}$ with 
$ - \beta^{t-1} \geq \zeta \geq - \beta^t$ and 
$E_{\min} \leq e(z) < E_{\max}$,
then the number in $\mathbb F$ closest to $\tilde z$ is either $- \lfloor - \zeta \rfloor \times \beta^{e(z)}$ or $- \lceil - \zeta \rceil \times \beta^{e(z)}$.
\end{corollary}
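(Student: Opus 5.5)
The plan is to reduce the corollary to Lemma \ref{lemma:floor&ceiling_are_the_closest_for_normal_numbers} by negation, using only that $\mathbb F$ (as defined in \eqref{eqn:DekkerFloatingPointSystem}) is symmetric and that negation is an isometry of $\mathbb R$. Given $\tilde z < 0$ with $\tilde z = \zeta \times \beta^{e(z)}$, $-\beta^{t-1} \geq \zeta \geq -\beta^t$ and $E_{\min} \leq e(z) < E_{\max}$, I set $\tilde z' = -\tilde z > 0$ and $\zeta' = -\zeta$. Then $\tilde z' = \zeta' \times \beta^{e(z)}$ with $\beta^{t-1} \leq \zeta' \leq \beta^t$ and the same exponent range, so the hypotheses of Lemma \ref{lemma:floor&ceiling_are_the_closest_for_normal_numbers} hold for $\tilde z'$. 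That lemma then gives that the element of $\mathbb F$ closest to $\tilde z'$ is $\lfloor -\zeta \rfloor \times \beta^{e(z)}$ or $\lceil -\zeta \rceil \times \beta^{e(z)}$. More precisely, its proof shows that for every $w \in \mathbb F$, $w>0$, one of these two candidates is at least as close to $\tilde z'$ as $w$. I would also cover $w \leq 0$ explicitly, since such $w$ are trivially farther from $\tilde z' > 0$ than $\lfloor -\zeta \rfloor \times \beta^{e(z)} > 0$.

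Next I would transfer this back. Take any $v \in \mathbb F$. By symmetry of $\mathbb F$, $-v \in \mathbb F$, and
\[
\vert v - \tilde z \vert = \vert (-v) - \tilde z' \vert .
\]
So $\vert v - \tilde z\vert$ is bounded below by the minimum of $\vert c - \tilde z'\vert$ over the two candidates $c$. This minimum equals the distance from $\tilde z$ to the corresponding $-c$, namely $-\lfloor -\zeta \rfloor \times \beta^{e(z)}$ or $-\lceil -\zeta \rceil \times \beta^{e(z)}$. These negated candidates lie in $\mathbb F$, again by symmetry and because the candidates themselves lie in $\mathbb F$, as established at the start of the proof of Lemma \ref{lemma:floor&ceiling_are_the_closest_for_normal_numbers}. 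Hence the closest element of $\mathbb F$ to $\tilde z$ is one of them, which is the claim.

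I do not expect a real obstacle. The only point needing care is that the negated candidates are genuinely members of $\mathbb F$, even though $-\lceil -\zeta\rceil \times \beta^{e(z)}$ may not be a valid representation when $-\zeta > \beta^t - \tfrac12$. As in the remark after Lemma \ref{lemma:floor&ceiling_are_the_closest_for_normal_numbers}, these are numbers rather than representations. Membership follows from $e(z) < E_{\max}$, which allows rewriting $\pm\beta^t \times \beta^{e(z)}$ as $\pm\beta^{t-1}\times\beta^{e(z)+1}$, and then from symmetry.
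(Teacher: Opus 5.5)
Your proposal is correct and follows the route the paper indicates: apply Lemma \ref{lemma:floor&ceiling_are_the_closest_for_normal_numbers} to $-\tilde z$, then transfer the conclusion back using the symmetry of $\mathbb R$ (and of $\mathbb F$) under negation. Your extra checks, on $w \leq 0$ and on the negated candidates being members of $\mathbb F$ as numbers rather than as valid representations, fill in details the paper leaves implicit.
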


For convenience, we introduce the notation $\mathbb D$ to refer to Dekker's system for the subsequent discussion to distinguish it from a generic floating point number system denoted by $\mathbb F$.
The following lemma allows us to reduce the reasoning of floating point rounding to integer rounding, which is considerably simpler. 
It is relied upon heavily in the subsequent proofs.
\begin{lemma}
\label{lemma:relationBetweenFloatingPointRoundingAndMantissaIntegerRounding} 
Under nearest rounding, given $\tilde{z} \in \mathbb R$ and 
$\vert \tilde{z} \vert \leq (M-1) \times \beta^{E_{\max}}$, 
i.e., $\tilde{z}$ does not overflow in Dekker's system $\mathbb D$, if $z = fl(\tilde{z})$ admits a representation $z = m(z) \times \beta^{e(z)}$, then expressing $\tilde{z}$ as $\tilde{z} = \zeta \times \beta^{e(z)}$, we have 
\begin{equation}
\label{eqn:m(z)=round(zeta)}
m(z) = round(\zeta),
\end{equation}
where $round(\zeta) \in \mathbb Z$ gives the nearest integer to $\zeta$ with a tie-breaking criterion, if needed, consistent with that used for $fl(\tilde{z})$.
\\
\normalfont
{\bf Proof.}
It suffices to consider only the case $\tilde{z} > 0$ because both $\mathbb R$ and $\mathbb D$ are symmetric and the case $\tilde{z} = 0$ is trivial.

We start by showing that $round(\zeta) \times \beta^{e(z)} \in \mathbb D$. 
Since $z = m(z) \times \beta^{e(z)}$ is a valid representation in $\mathbb D$, we have $e(z) \in \left[ E_{\min} , E_{\max} \right]$. 
Thus, we only need to show that $round(\zeta) \leq M-1$.
First, we consider the special case where $e(z) = E_{\max}$: the condition $\vert \tilde{z} \vert \leq (M-1) \times \beta^{E_{\max}}$ implies that $\zeta \leq M-1$ and, therefore, we have $round(\zeta) \leq M-1$ as desired.
When $e(z) < E_{\max}$, we separate the discussion into three cases depending on the size of $\zeta$.

\begin{adjustwidth}{1.5em}{}
\setlength{\parindent}{0em}
\textcolor{kwdcolor}{Case 1: if $\zeta < M - \nicefrac{1}{2}$}, then we have the desired result $round(\zeta) \leq M-1$.

\textcolor{kwdcolor}{Case 2: if $\zeta > M - \nicefrac{1}{2}$}, then we have 
$
\zeta - m(z) \geq \zeta - (M-1) > \vert \zeta - M \vert.
$
In other words, $M \times \beta^{e(z)} = \tfrac{M}{\beta} \times \beta^{e(z)+1} \in \mathbb D$ 
is closer to $\tilde{z}$ than any valid representation in $\mathbb D$ with exponent $e(z)$.
%
This contradicts the condition that $z$ admits a representation in $\mathbb D$ with exponent $e(z)$ since $M \times \beta^{e(z)}$ is not a valid representation itself. 
%
%
Therefore, this is a null case.

\textcolor{kwdcolor}{Case 3: if $\zeta = M - \nicefrac{1}{2}$}, then $M-1$ and $M$ are the two closest integers to $\zeta$ with equal distance. 
Moreover, from Lemma \ref{lemma:floor&ceiling_are_the_closest_for_normal_numbers}, 
we know that $(M-1) \times \beta^{e(z)}$ and $\tfrac{M}{\beta} \times \beta^{e(z)+1}$ 
are the two floating point numbers in $\mathbb D$ closest to $\tilde{z}$ with equal distance. 
Furthermore, the condition that $z$ admits a representation with exponent $e(z)$ implies that the tie-breaking criterion rounds $\tilde{z}$ to $(M-1) \times \beta^{e(z)}$ instead of $\tfrac{M}{\beta} \times \beta^{e(z)+1}$. 
Since the tie-breaking criterion is assumed to be consistent when rounding $\tilde{z}$ and $\zeta$, 
we have $round(\zeta) = M-1$ as desired.
%
\end{adjustwidth}
\noindent We now have established that $round(\zeta) \times \beta^{e(z)} \in \mathbb D$.
Next, we proceed to show that $\tilde{z}$ indeed rounds to $round(\zeta) \times \beta^{e(z)}$ by contradiction.
\textcolor{kwdcolor}{Assuming} $m(z) \neq round(\zeta)$, we have $\vert m(z) - round(\zeta) \vert \geq 1$ since they are different integers. 
This leads to
\begin{equation}
\label{eqn:comparing_distance_to_zeta_from_m(z)_and_round(zeta)}
\vert m(z) - \zeta \vert 
\geq \vert m(z) - round(\zeta) \vert - \vert \zeta - round(\zeta) \vert
\geq 1 - \tfrac{1}{2}
= \tfrac{1}{2},
\end{equation}
where we have used $\vert \zeta - round(\zeta) \vert \leq \tfrac{1}{2}$.

If either 
$\vert m(z) - \zeta \vert > \tfrac{1}{2}$ or 
$\vert round(\zeta) - \zeta \vert < \tfrac{1}{2}$ holds, we have 
$\vert m(z) - \zeta \vert > \vert round(\zeta) - \zeta \vert$.
In other words, $round(\zeta) \times \beta^{e(z)}$ would be closer to $\tilde{z}$ than $m(z) \times \beta^{e(z)}$, which contradicts the condition that $m(z) \times \beta^{e(z)}$ is a representation of $fl(\tilde{z})$.
The only remaining case is $\vert m(z) - \zeta \vert = \tfrac{1}{2} =\vert round(\zeta) - \zeta \vert$, which would contradict the condition that the tie-breaking criterion is consistent when rounding $\tilde{z}$ and $\zeta$ since $\tilde{z}$ rounds to $m(z) \times \beta^{e(z)}$ instead of $round(\zeta) \times \beta^{e(z)}$.
Hence, the \textcolor{kwdcolor}{assumption} $m(z) \neq round(\zeta)$ must be incorrect and \eqref{eqn:m(z)=round(zeta)} must hold. 

\hfill {\bf End of proof for Lemma \ref*{lemma:relationBetweenFloatingPointRoundingAndMantissaIntegerRounding}.}
\end{lemma}

\subsection{Preparatory results on addition}

Below, we give a few results on the addition operation that are useful for the subsequent discussion.
The following lemma, although intuitive, is still worth a rigorous statement. 
\begin{lemma}
\label{lemma:e(z)BoundFromAbove}
Given $x, y \in \mathbb D$ with $e(x) \geq e(y)$ and $z$ obtained via $z = fl(x+y)$, then $z$ admits a representation in $\mathbb D$ with $e(z) \leq e(x) + 1$.\\
\normalfont
{\bf Proof.}
Since $x, y \in \mathbb D$ and $e(x) \geq e(y)$, we have 
\begin{equation*}
\vert x + y \vert
\leq \vert x \vert + \vert y \vert
\leq 2 \cdot \left( M - 1 \right) \times \beta^{e(x)} 
\leq \left( M - 1 \right) \times \beta^{e(x)+1}.
\end{equation*}
Using Lemma \ref{lemma:Monotonicity} (Monotonicity) and recognizing that $\left( M - 1 \right) \times \beta^{e(x)+1} \in \mathbb D$ if $e(x) < E_{\max}$, we arrive at
\begin{equation*}
\vert z \vert = \vert fl(x + y) \vert 
\leq \left( M - 1 \right) \times \beta^{e(x)+1}
\end{equation*}
in this case and, hence, $z$ admits a representation in $\mathbb D$ with $e(z) \leq e(x) + 1$.

If $e(x) = E_{\max}$, it is necessary that $e(z) \leq e(x)$ and therefore the lemma still holds.
Overflow is not a special concern here because in Dekker's system overflow leads to clipping. If overflow happens, it is necessary that $e(x) = E_{\max}$ and $z$ would take the value $\left( M - 1 \right) \times \beta^{E_{\max}}$.

\hfill {\bf End of proof for Lemma \ref*{lemma:e(z)BoundFromAbove}.}
\end{lemma}

The simple result below is interesting on its own and handy in the subsequent proofs as well.
\begin{lemma}
\label{lemma:z=x+yLeadsToExactSubtraction}
For $x, y \in \mathbb D$ and $z = fl(x+y)$, if $z=x+y$, i.e., there is no rounding error committed when forming $z$, then the subsequent operation $z-x$ or $z-y$ commits no rounding error either. \\
\normalfont
{\bf Proof.}
Since $z=x+y$, we have $z-x=y \in \mathbb D$, and, thus, $fl(z-x)=fl(y)=y$ commits no rounding error. Similar argument can be made for $z-y$.

\hfill {\bf End of proof for Lemma \ref*{lemma:z=x+yLeadsToExactSubtraction}.}
\end{lemma}

The following result is the counterpart of \cite[Theorem~2.4]{higham2002accuracy} stated in Dekker's system, which is more general in its statement and simpler in its proof thanks to the adoption of Dekker's system. 
Interested readers may also compare to its statement and proof from \cite{ferguson1995exact} and find its application in evaluating exponential functions there.
Faithful rounding is sufficient for the proof given below. 
\begin{lemma}
\label{lemma:FergusonCounterpart}
For $x, y \in \mathbb D$ and $z = fl(x+y)$, assuming overflow does not happen, if $x$, $y$, and $z$ admit representations in $\mathbb D$ such that $e(z) \leq min(e(x),e(y))$, then $z = x + y$, i.e., the addition is exact. \\
\normalfont
{\bf Proof.}
We can express $x+y$ as $\zeta \times \beta^{e(z)}$, where 
\begin{equation}
\label{eqn:zeta_for_(x+y)_in_e(z)}
\zeta = m(x) \cdot \beta^{e(x)-e(z)} + m(y) \cdot \beta^{e(y)-e(z)}.
\end{equation}
Since $e(x) \geq e(z)$ and $e(y) \geq e(z)$, we have $\zeta \in \mathbb Z$. 

If $\vert \zeta \vert \geq M$, $\zeta \times \beta^{e(z)}$ {\it cannot} possibly round to a representation with exponent $e(z)$. 
To see this more clearly and without loss of generality, assuming $\zeta \geq M$, then $\zeta \times \beta^{e(z)}$ would be closer to $\frac{M}{\beta} \times \beta^{e(z)+1}$ than to any valid representation in $\mathbb D$ with exponent $e(z)$. 
Thus, we have $\vert \zeta \vert < M$ and hence $x+y = \zeta \times \beta^{e(z)} \in \mathbb D$. The statement in the lemma holds following faithful rounding.
%

\hfill {\bf End of proof for Lemma \ref*{lemma:FergusonCounterpart}.}
\end{lemma}

The following result is a direct application of Lemma \ref{lemma:FergusonCounterpart}. It is useful for including subnormal numbers in the discussion such as in the case of Corollary \ref{col:bound_zz_by_exponent_e(z)}.
\begin{lemma}
\label{lemma:additionIsExactWhenEmin}
For $x, y \in \mathbb D$ and $z = fl(x+y)$, if $z$ admits a representation with exponent $E_{\min}$, then $z = x + y$, i.e., the addition is exact. \\
\normalfont
{\bf Proof.}
For the representation of $z$ with exponent $E_{\min}$, it is necessary that $e(x) \geq e(z)$ and $e(y) \geq e(z)$, hence, Lemma \ref{lemma:FergusonCounterpart} applies.

\hfill {\bf End of proof for Lemma \ref*{lemma:additionIsExactWhenEmin}.}
\end{lemma}
As a {\it special} case of the above lemma, we have that when $z = fl(x+y)$ is a {\it subnormal} number, 
the addition $x+y$ is exact, which is often the version stated in literature (e.g., \cite[Theorem 3.4.1]{hauser1996handling}). 

\section{Error free transformation over addition}
\label{sec:EFTunderAddition}

The following theorem is one of the building blocks of this work. 
It has already appeared in Dekker's work \cite{dekker1971floating}, but lacks detail there.
We provide a detailed version of the proof below and build on it for our later results.
Interested readers may find in \cite{kahan1965pracniques, moller1965quasi, knuth1997art_vol2} other versions of this result 
in a less general setting.

\begin{theorem}[3op EFT over addition in Dekker's system]
\label{thm:3opEFTadditionDekker}
Under Rule \ref{Rule:AsIf} and nearest rounding, $\forall x, y \in \mathbb D$ with $e(x) \geq e(y)$, the following formulas achieve EFT $z + zz = x + y:$
\begin{subequations}
\label{DekkerAdditionEFT3op}
\begin{align}
 z & = fl(x + y); \label{DekkerAdditionEFT3op->z} \\
 w & = fl(z - x); \label{DekkerAdditionEFT3op->w} \\
zz & = fl(y - w). \label{DekkerAdditionEFT3op->zz}
\end{align}
\end{subequations}
\normalfont
\noindent{\bf Proof.}
The gist of the proof below is to show that under the given conditions, neither \eqref{DekkerAdditionEFT3op->w} nor \eqref{DekkerAdditionEFT3op->zz} commits rounding error. 
%
%
Defining $d, f, g \in \mathbb Z$ as follows,
\begin{subequations}
\label{eqn:Definition_d_f_g}
\begin{align}
d & = e(x) - e(y); \label{eqn:Definition_d} \\
f & = e(x) - e(z); \label{eqn:Definition_f} \\
g & = e(y) - e(z), \label{eqn:Definition_g}
\end{align}
\end{subequations}
we have $d \geq 0$ from the condition $e(x) \geq e(y)$ of the theorem
and the following equivalent ways of expressing $x$, $y$, and $z$:
\begin{subequations}
\label{eqn:expressions_x_y_z}
\begin{alignat}{4}
x 
& \enskip=\enskip & m(x)                  \times \beta^{e(x)} 
& \enskip=\enskip & m(x) \cdot \beta^{ d} \times \beta^{e(y)}
& \enskip=\enskip & m(x) \cdot \beta^{ f} \times \beta^{e(z)} &; 
\label{eqn:expressions_x} \\
y 
& \enskip=\enskip & m(y) \cdot \beta^{-d} \times \beta^{e(x)} 
& \enskip=\enskip & m(y)                  \times \beta^{e(y)}
& \enskip=\enskip & m(y) \cdot \beta^{ g} \times \beta^{e(z)} &; 
\label{eqn:expressions_y} \\
z 
& \enskip=\enskip & m(z) \cdot \beta^{-f} \times \beta^{e(x)} 
& \enskip=\enskip & m(z) \cdot \beta^{-g} \times \beta^{e(y)} 
& \enskip=\enskip & m(z)                  \times \beta^{e(z)} &.
\label{eqn:expressions_z}
\end{alignat}
\end{subequations}
%
%
We note here that the above equalities are stated in $\mathbb R$ rather than in $\mathbb D$ because the mantissas may be out of range or no longer be integers. 
From \eqref{eqn:expressions_x} and \eqref{eqn:expressions_y}, we have
\begin{equation}
\label{eqn:expression_x+y}
x + y = \left( m(x) \cdot \beta^f + m(y) \cdot \beta^g \right) \times \beta^{e(z)}.
\end{equation}
Using Lemma \ref{lemma:relationBetweenFloatingPointRoundingAndMantissaIntegerRounding}, we have 
\begin{equation}
\label{eqn:expression_m(z)}
m(z) = round \left( m(x) \cdot \beta^f + m(y) \cdot \beta^g \right).
\end{equation}
We introduce 
\begin{equation}
\label{zetaDefinition}
\zeta = m(x) \cdot \beta^f + m(y) \cdot \beta^g
\end{equation}
to simplify the notation below.

We proceed by showing that \eqref{DekkerAdditionEFT3op->w}, i.e., the calculation of $w$, commits no rounding error in all cases categorized by the signs of $f$ and $g$. 
From Lemma \ref{lemma:e(z)BoundFromAbove}, we know that $z$ admits a representation with $e(z) \leq e(x) + 1$ in Dekker's system. 
We choose to work with such a representation of $z$ for the discussion below, which is needed (only) in Case 4.
Moreover, we assume that overflow does not happen when forming $z$ in \eqref{DekkerAdditionEFT3op->z} for the cases below. 
The special scenario of overflow is addressed immediately after the discussion of the cases below.

\begin{adjustwidth}{1.5em}{}
\setlength{\parindent}{0em}
\textcolor{kwdcolor}{Case 1: if $f \geq 0$ and $g \geq 0$}, we have $\zeta \in \mathbb Z$ and, therefore, $m(z) = \zeta$. 
In other words, $z = x+y$ is formed without rounding error in \eqref{DekkerAdditionEFT3op->z}.
According to Lemma \ref{lemma:z=x+yLeadsToExactSubtraction}, \eqref{DekkerAdditionEFT3op->w} also commits no rounding error.

\textcolor{kwdcolor}{Case 2: if $f \geq 0$ and $g < 0$}, $\zeta$ is no longer necessarily an integer.
From \eqref{eqn:expressions_x_y_z}, we can write
\begin{equation}
\label{eqn:(z-x)inCase:f>=0;g<0}
z - x = \left( m(z) - m(x) \cdot \beta^f \right) \times \beta^{e(z)}.
\end{equation}
Denoting $\omega = m(z) - m(x) \cdot \beta^f$, we have $\omega \in \mathbb Z$ since $f \geq 0$ 
and
\begin{align*}
\left\vert \omega \right\vert 
& = \left\vert m(z) - m(x) \cdot \beta^f - m(y) \cdot \beta^g + m(y) \cdot \beta^g \right\vert \\
& \leq \left\vert round(\zeta) - \zeta \right\vert + \left\vert m(y) \cdot \beta^g \right\vert \\
& \leq \tfrac{1}{2} + \tfrac{M-1}{\beta} 
\leq \tfrac{M}{2}.
\end{align*}
We have $\tfrac{M}{2} \leq M-1$ if $\beta \geq 2$ and $t \geq 1$, which are satisfied for any natural floating point number system. 
It follows that $z - x = \omega \times \beta^{e(z)} \in \mathbb D$, and hence $w = fl(z-x)$ commits no rounding error.

\textcolor{kwdcolor}{Case 3: if $f < 0$ and $g \geq 0$}, we have $d = f - g < 0$, which contradicts with the condition $d \geq 0$. 

\textcolor{kwdcolor}{Case 4: if $f < 0$ and $g < 0$}, recall that $e(z) \leq e(x) + 1$, we have $f \geq -1$ and, therefore, $f=-1$.
In general, we can write 
\begin{equation}
\label{eqn:(z-x)inCase:f<0;g<0}
z - x = \left( m(z) \cdot \beta^{-f} - m(x) \right) \times \beta^{e(x)}.
\end{equation}
Denoting $\omega = m(z) \cdot \beta^{-f} - m(x)$, we have $\omega \in \mathbb Z$ and
\begin{subequations}
\label{eqn:muBoundAboveInCase:f<0;g<0}
\begin{align}
\left\vert \omega \right\vert 
& = 
\left\vert 
  m(z)                 \cdot \beta^{-f}
- m(x) \cdot \beta^{f} \cdot \beta^{-f} 
- m(y) \cdot \beta^{g} \cdot \beta^{-f} 
+ m(y) \cdot \beta^{g} \cdot \beta^{-f} 
\right\vert \\
& =
\left\vert 
\left( m(z) - m(x) \cdot \beta^{f} - m(y) \cdot \beta^{g} \right) \cdot \beta^{-f} 
\right\vert
+ 
\left\vert 
m(y) \cdot \beta^{-d} 
\right\vert \\
& \leq
\tfrac{1}{2} \cdot \beta + (M-1),
\label{eqn:muBoundAboveInCase:f<0;g<0->relax}
\end{align}
\end{subequations}
where we have used $f=-1$ and $d \geq 0$ to arrive at \eqref{eqn:muBoundAboveInCase:f<0;g<0->relax}.
So far, we have kept the discussion general with respect to $\beta$. 
Here, we specialize to the case $\beta=2$, which leads to $\vert\omega\vert \leq M$ following \eqref{eqn:muBoundAboveInCase:f<0;g<0->relax}.
If $\vert\omega\vert < M$, then $z - x = \omega \times \beta^{e(x)}$ is a valid representation in $\mathbb D$; 
%
if $\vert\omega\vert = M$, 
then $z - x = \text{sign}(\omega) \cdot \frac{M}{\beta} \times \beta^{e(z)}$ 
is a valid representation in $\mathbb D$. In both cases, we have $w = fl(z - x) = z - x$ as desired.
%
\end{adjustwidth}

We now address the special scenario of \textcolor{kwdcolor}{overflow} when forming $z$ in \eqref{DekkerAdditionEFT3op->z}, i.e., when $\vert x + y \vert > (M-1) \times \beta^{E_{\max}}$. 
If $f<0$, i.e., Cases 3 and 4 above, this cannot happen because 
\[
\vert x+y \vert 
\leq 2 \cdot (M-1) \times \beta^{e(x)} 
\leq (M-1) \times \beta^{e(x)+1} 
\leq (M-1) \times \beta^{e(z)} 
\leq (M-1) \times \beta^{E_{\max}}. 
\]
If $f \geq 0$, i.e., Cases 1 and 2 above, overflow may happen in \eqref{DekkerAdditionEFT3op->z}, which leads to $\vert z \vert = (M - 1) \times \beta^{E_{\max}}$ in Dekker's system.
Since $f = e(x) - e(z) \geq 0$, we have $e(x) = E_{\max}$ as well. 
Moreover, for overflow to happen, $x$ and $y$ must have the same sign.
Hence, $x$ and $z$ must also have the same sign. 
Therefore, we have $\vert m(z) - m(x) \vert < \vert m(z) \vert = M - 1$ and that 
$
z - x = \big(m(z) - m(x) \big) \times \beta^{E_{\max}}
$ 
is a valid representation in Dekker's system.
To sum up, the claim that \eqref{DekkerAdditionEFT3op->w} commits no rounding error still holds when \textcolor{kwdcolor}{overflow} in $z$ is considered.

We proceed to show that \eqref{DekkerAdditionEFT3op->zz} also commits no rounding error under the conditions stated in the theorem.
We first note that for all cases discussed above, including the special case of overflow, $w$ admits a valid representation in $\mathbb D$ with an exponent at least $e(y)$: 
in Case 1, we have $w = y = m(y) \times \beta^{e(y)}$; 
in Case 2, $w$ admits a representation in \eqref{eqn:(z-x)inCase:f>=0;g<0} with exponent $e(z)$, where $e(z) > e(y)$ follows $g > 0$;
Case 3 is a null case;
in Case 4, $w$ admits a representation in \eqref{eqn:(z-x)inCase:f<0;g<0} with exponent $e(x)$, where $e(x) \geq e(y)$ follows $d \geq 0$;
in the special case of overflow, $w$ admits a representation $\big(m(z) - m(x) \big) \times \beta^{E_{\max}}$, where $E_{\max} \geq e(y)$ by definition.
Therefore, we can work with a representation $w = m(w) \times \beta^{e(w)}$, where $e(w) \geq e(y)$.

Introducing $h = e(w) - e(y) \geq 0$, we can write
\[
w = m(w) \cdot \beta^h \times \beta^{e(y)}
\]
%
and
\begin{equation}
\label{eqn:(y-w)inDekkerAdditionEFT3op}
y - w = \left( m(y) - m(w) \cdot \beta^h \right) \times \beta^{e(y)},
\end{equation}
where $m(y) - m(w) \cdot \beta^h \in \mathbb Z$.
Moreover, we have
\begin{subequations}
\label{eqn:(y-w)isExact:boundMantissa}
\begin{align}
\vert y - w \vert 
& = \vert y - (z - x) \vert 
  = \vert (x + y) - z \vert 
\label{eqn:(y-w)isExact:boundMantissa->w=(z-x)} \\
& = \vert (x + y) - fl(x + y) \vert 
\leq \vert (x + y) - x \vert = \vert y \vert,
\label{eqn:(y-w)isExact:boundMantissa->nearestRounding}
\end{align}
\end{subequations}
where \eqref{eqn:(y-w)isExact:boundMantissa->w=(z-x)} used the just proved result that $w = z -x$ and \eqref{eqn:(y-w)isExact:boundMantissa->nearestRounding} used the nearest rounding property. 
%
It follows that $\vert m(y) - m(w) \cdot \beta^h \vert \leq \vert m(y) \vert \leq M - 1$ and, therefore, $y - w \in \mathbb D$ with \eqref{eqn:(y-w)inDekkerAdditionEFT3op} being a valid representation. 
In other words, $zz = fl(y - w) = y - w$. 

\hfill {\bf End of proof for Theorem \ref{thm:3opEFTadditionDekker}.}
\end{theorem}

\begin{remark}[Generality]
\label{rmk:generality}
The statement made in Theorem \ref{thm:3opEFTadditionDekker} is more general than what is typically stated in the literature, e.g., \cite{moller1965quasi, linnainmaa1974analysis,  knuth1997art_vol2, goldberg1991every, higham2002accuracy}, which often imposes the stronger condition that $e(x) \geq e(y)$ holds for the normal representations of $x$ and $y$, or sometimes the even stronger one $\vert x \vert \geq \vert y \vert$.
Instead, Theorem \ref{thm:3opEFTadditionDekker} only requires that $x$ and $y$ admit a combination of representations such that $e(x) \geq e(y)$. They may very well admit representations such that the opposite is true. This generality is exploited in the proof of the following theorem.

\hfill {\bfseries{\itshape End of Remark} \ref*{rmk:generality}.}
\end{remark}

We proceed to show that with three additional arithmetic operations as stated in the following theorem, the condition $e(x) \geq e(y)$ from Theorem \ref{thm:3opEFTadditionDekker} can be removed.
This result has appeared in Dekker's work \cite{dekker1971floating} without a proof. 
The proof given below uses the result of Theorem \ref{thm:3opEFTadditionDekker} and follows a similar pattern.
We assume $\beta=2$ throughout the discussion of Theorem \ref{thm:6opEFTadditionDekker} and its associated lemmas.


\begin{theorem}[6op EFT over addition in Dekker's system]
\label{thm:6opEFTadditionDekker}
Under Rule \ref{Rule:AsIf} and nearest rounding, $\forall x, y \in \mathbb D$, the following formulas achieve EFT $z + zz = x + y:$
\begin{subequations}
\label{DekkerAdditionEFT6op}
\begin{align}
 z & = fl( x  +  y); \label{DekkerAdditionEFT6op->z}  \\
 w & = fl( z  -  x); \label{DekkerAdditionEFT6op->w}  \\
z1 & = fl( y  -  w); \label{DekkerAdditionEFT6op->z1} \\
 v & = fl( w  -  z); \label{DekkerAdditionEFT6op->v}  \\
z2 & = fl( x  +  v); \label{DekkerAdditionEFT6op->z2} \\
zz & = fl( z1 + z2). \label{DekkerAdditionEFT6op->zz}
\end{align}
\end{subequations}
\normalfont
\noindent{\bf Proof.}
The case when $x$ and $y$ admit representations with $e(x) \geq e(y)$ has already been covered by the proof of Theorem \ref{thm:3opEFTadditionDekker} since it is easy to show that $z2 = 0$ following $w = z - x$ in this case.
Therefore, we limit our discussion below to the complementary case, i.e., when $x$ and $y$ {\it do not} admit representations such that $e(x) \geq e(y)$.

In this scenario, aside from the obvious condition $d = e(x) - e(y) < 0$, we also have that $y$ cannot admit a representation with the exponent $E_{\min}$ since, otherwise, any representation of $x$ would lead to $e(x) \geq e(y)$ for this representation of $y$.
Consequently, $y$ admits a normal representation in $\mathbb D$.
%
We work with this normal representation of $y$ in the following.
In other words, when writing
$
y = m(y) \times \beta^{e(y)}
$
below, the conditions $e(y) > E_{\min}$ and $\beta^{t-1} \leq \vert m(y) \vert < \beta^t$ are implied.
These conditions appear in Lemma \ref{lemma:numbersWithinInterval[-1/2,1/2]:e(y)>e(x)ANDyBeingNormal} and Lemma \ref{lemma:numbersWithinInterval[-1,1]:e(y)>e(x)ANDyBeingNormal} invoked in this proof.

The proof below consists of three parts: 
first, we show that $z1=y-w$ under the above conditions; 
second, we show that $w-z2 = z-x$ by invoking Theorem \ref{thm:3opEFTadditionDekker}; 
third, we show that $zz = z1 + z2$.

We proceed to show that $z1=y-w$ still holds 
by separating the discussion into four cases based on the signs of $f$ and $g$, similar to that in the proof of Theorem \ref{thm:3opEFTadditionDekker}.
%
We assume that overflow does not happen when forming $z$ for the four cases below. 
The special scenario of overflow is addressed immediately after the discussion of these cases.

\begin{adjustwidth}{1.5em}{}
\setlength{\parindent}{0em}
\textcolor{kwdcolor}{Case 1: if $f \geq 0$ and $g > 0$}, we again have $\zeta \in \mathbb Z$ as in Case 1 of Theorem \ref{thm:3opEFTadditionDekker}, where $\zeta$ is defined in \eqref{zetaDefinition}, and thus, $z = x + y$. This leads to $w = z - x = y$ 
and $z1 = fl(y - w) = 0 = y - w$ as desired. 
Additionally, we also have $v = -x$, $z2 = 0$, and $zz = 0$ following $w = z - x$.

\textcolor{kwdcolor}{Case 2: if $f \geq 0$ and $g \leq 0$}, we have $d = f - g \geq 0$, which contradicts the assumption $d < 0$. Thus, this is a null case.

\textcolor{kwdcolor}{Case 3: if $f < 0$ and $g \geq 0$}, recall that $z = m(z) \times \beta^{e(z)} = round(\zeta) \times \beta^{e(z)}$, where $\zeta$ is defined in \eqref{zetaDefinition}, 
we have 
\begin{subequations}
\label{eqn:rangeOf(z-x)in6op:case:f<0;g>=0}
\begin{align}
z - x 
& = \left( m(z) - m(x) \cdot \beta^f \right) \times \beta^{e(z)} 
\label{eqn:rangeOf(z-x)in6op:case:f<0;g>=0->a} \\
& = \left( m(z) - m(x) \cdot \beta^f - m(y) \cdot \beta^g + m(y) \cdot \beta^g \right) \times \beta^{e(z)} 
\label{eqn:rangeOf(z-x)in6op:case:f<0;g>=0->b} \\
& = y + \left( m(z) - m(x) \cdot \beta^f - m(y) \cdot \beta^g \right) \times \beta^{e(z)} 
\label{eqn:rangeOf(z-x)in6op:case:f<0;g>=0->c} \\
& \in \left[ y - \tfrac{1}{2} \times \beta^{e(z)} , y + \tfrac{1}{2} \times \beta^{e(z)} \right] 
\subseteq \left[ y - \tfrac{1}{2} \times \beta^{e(y)} , y + \tfrac{1}{2} \times \beta^{e(y)} \right] 
\label{eqn:rangeOf(z-x)in6op:case:f<0;g>=0->d}.
\end{align}
\end{subequations}
From Lemma \ref{lemma:numbersWithinInterval[-1/2,1/2]:e(y)>e(x)ANDyBeingNormal} presented after this theorem, we know 
that $y - fl(z-x) = y-w$ can only take values $\pm 1 \times \beta^{e(y)-1}$, $0$, or $\pm 1 \times \beta^{e(y)}$, all of which are in $\mathbb D$. 
Therefore, we have $z1 = y-w$ as desired.

\textcolor{kwdcolor}{Case 4: if $f < 0$ and $g < 0$}, we have that $z$ admits a representation with $e(z) \leq e(y) + 1$ following Lemma \ref{lemma:e(z)BoundFromAbove}. 
Working with a representation of $z$ such that $e(z) \leq e(y) + 1$ holds, the condition that $g < 0$ further 
implies that $e(z) = e(y) + 1$. 
Similar to \eqref{eqn:rangeOf(z-x)in6op:case:f<0;g>=0}, we have in this case
\begin{equation}
\label{eqn:rangeOf(z-x)in6op:case:f<0;g<0}
z - x 
\in \left[ y - \tfrac{1}{2} \times \beta^{e(z)} , y + \tfrac{1}{2} \times \beta^{e(z)} \right] 
= \left[ y - 1 \times \beta^{e(y)} , y + 1 \times \beta^{e(y)} \right], 
\end{equation}
where $\beta = 2$ is used for the last equality.
When $t > 1$, we can invoke Lemma \ref{lemma:numbersWithinInterval[-1,1]:e(y)>e(x)ANDyBeingNormal} presented after this theorem to arrive at the desired result $z1 = y - w$ as in Case 3.
The special situation $t = 1$ is addressed in Remark \ref{rmk:6opEFT_Case4_t=1} following this theorem.
\end{adjustwidth}

We now address the special scenario of \textcolor{kwdcolor}{overflow} when forming $z$. 
Notice that overflow cannot possibly happen in Cases 1 and 2 above because we have $e(z) < e(y)$ under the combination of assumptions $d < 0$ and $f \geq 0$.
Therefore, we need only to concentrate on Cases 3 and 4 above, where $f < 0$.
%
%
Moreover, since $x$ and $y$ must have the same sign for $z$ to overflow, 
it is sufficient to address only the case where both $x$ and $y$ are positive. 
Because overflow leads to clipping in Dekker's system, we have $z = \big(\beta^t - 1\big) \times \beta^{E_{\max}}$ and, therefore,
\[
z - x 
= \left( \big(\beta^t-1\big) - m(x) \cdot \beta^{f} \right) \times \beta^{E_{\max}} 
\geq \left( \big(\beta^t-1\big) - \big(\beta^t-1\big) \cdot \beta^{-1} \right) \times \beta^{E_{\max}} 
= \big(\beta^t-1\big) \times \beta^{E_{\max}-1},
\]
where $f \leq -1$ is used for the inequality and $\beta = 2$ is used for the last equality.
Moreover, because of overflow, we also have $x+y>z$, i.e., $z-x<y$.
Thus, by monotonicity, we have 
\begin{equation}
\label{eqn:bounds_of_w_in_6op:case:overflow}
\big(\beta^t-1\big) \times \beta^{E_{\max}-1} \leq w \leq y. 
\end{equation}
Because of \eqref{eqn:bounds_of_w_in_6op:case:overflow}, we can write 
\begin{subequations}
\label{eqn:y_and_w:6op:case:overflow}
\begin{align}
y & = \tilde{m}(y) \times \beta^{E_{\max}-1} \label{eqn:y:6op:case:overflow}; \\
w & = \tilde{m}(w) \times \beta^{E_{\max}-1} \label{eqn:w:6op:case:overflow},
\end{align}
\end{subequations}
where $\tilde{m}(y)\in\mathbb Z$ and $\tilde{m}(w)\in\mathbb Z$.
Notice that we have used $\tilde{m}(\cdot)$ instead of $m(\cdot)$ for the mantissas in \eqref{eqn:y:6op:case:overflow} and \eqref{eqn:w:6op:case:overflow} because they are not necessarily valid representations in $\mathbb D$.
Moreover, since $\tilde{m}(y) \leq \big(\beta^t - 1\big) \cdot \beta$ and $\tilde{m}(w) \geq \big(\beta^t-1\big)$, we can write
\begin{equation}
\label{eqn:y-w:6op:case:overflow}
y - w = \big(\tilde{m}(y) - \tilde{m}(w)\big) \times \beta^{E_{\max}-1},
\end{equation}
where $\tilde{m}(y) - \tilde{m}(w) \in \mathbb Z$ and $0 \leq \tilde{m}(y) - \tilde{m}(w) \leq \beta^t - 1$.
Thus, $y-w \in \mathbb D$ with \eqref{eqn:y-w:6op:case:overflow} being a valid representation, which leads to $z1 = y - w$ as desired.
This concludes the discussion for the special scenario of \textcolor{kwdcolor}{overflow} when forming $z$.

Now that we completed the demonstration of $z1 = y - w$, we proceed to show that $w-z2 = z-x$.
When $f \geq 0$, from the discussion above, Case 1 is the only nontrivial case under the conditions of the theorem, 
for which $w-z2 = z-x$ holds because $w = z - x$ and $z2 = 0$.
When $f<0$, i.e., $e(x) < e(z)$, following Theorem \eqref{thm:3opEFTadditionDekker}, operations \eqref{DekkerAdditionEFT6op->w}, \eqref{DekkerAdditionEFT6op->v}, and \eqref{DekkerAdditionEFT6op->z2} form a 3op EFT over addition (after adjusting for the signs), which leads to $w - z2 = z - x$.
Combined with $z1 = y - w$ obtained earlier, this leads to 
\begin{equation}
\label{eqn:(z+z1+z2)in6op}
z + z1 + z2 = x + y.
\end{equation}

It remains to show that $zz = z1 + z2$, i.e., \eqref{DekkerAdditionEFT6op->zz} also commits no rounding error.
For all non-empty cases discussed above, including the special scenario of overflow when forming $z$ and the special systems with $t=1$, we have that $z1 = y - w$ admits a representation in $\mathbb D$ with an exponent no less than $e(y) - 1$. 
Since $d = e(x) - e(y) < 0$, it follows that $z1$ can be expressed as $\tilde{m}(z1) \times \beta^{e(x)}$ where $\tilde{m}(z1) \in \mathbb Z$.
%
Moreover, from the proof of Theorem \eqref{thm:3opEFTadditionDekker}, we have that $z2$ can be expressed as $m(z2) \times \beta^{e(x)}$ where $m(z2) \in \mathbb Z$ and $\vert m(z2) \vert < M$.
%
Therefore, $z1+z2$ can be expressed as an integer times $\beta^{e(x)}$ as well, i.e., $z1+z2 = \Big(\tilde{m}(z1) + m(z2)\Big) \times \beta^{e(x)}$.
From \eqref{eqn:(z+z1+z2)in6op}, we have
\[
\vert z1 + z2 \vert = \vert x + y - z \vert \leq \vert x + y - y \vert = \vert x \vert
\]
following the nearest rounding property, 
which leads to $\vert \tilde{m}(z1) + m(z2) \vert \leq \vert m(x) \vert < M$.
Therefore, $z1+z2 \in \mathbb D$ 
and 
$zz = z1 + z2$ follows.
%

\hfill {\bf End of proof for Theorem \ref{thm:6opEFTadditionDekker}.}
\end{theorem}

\begin{remark}[Special scenario: $t=1$] 
\label{rmk:6opEFT_Case4_t=1}
We use this remark to address the special scenario when $t=1$ in Case 4 of Theorem \ref{thm:6opEFTadditionDekker}. 
When $t=1$, the mantissa can only take value $0$ or $\pm 1$. 
If $z = 0$, $z$ obviously admits a representation with exponent $E_{\min}$ and,
hence, we can invoke Lemma \ref{lemma:additionIsExactWhenEmin} to arrive at $z = x + y$.
If either $x$ or $y$ is $0$, $z = x + y$ also holds since $x,y \in \mathbb D$.
Following the same argument as in Case 1 of Theorem \ref{thm:6opEFTadditionDekker}, we arrive at $zz = 0$ and $z + zz = x + y$ subsequently.

Given the above observation, we assume that $x$, $y$, and $z$ all have nonzero mantissa below.
Recall the conditions $f = e(x) - e(z) < 0$ and $g = e(y) - e(z) < 0$ for Case 4 of Theorem \ref{thm:6opEFTadditionDekker}, combined with $t=1$, we have that $\vert x \vert < \vert z \vert$ and $\vert y \vert < \vert z \vert$, which imply that $x$ and $y$ must have the same sign, and so does $z$. 
Therefore, it suffices to consider the case where $x$, $y$, and $z$ all have mantissa $1$.

Recall that $z$ admits a representation with $e(z) = e(y) + 1$ from Case 4 of Theorem \ref{thm:6opEFTadditionDekker}, we can write $y = 1 \times \beta^{e(y)}$ and $z = 1 \times \beta^{e(y)+1}$.
%
Moreover, following the condition $d = e(x) - e(y) < 0$, it is necessary that $e(x) = e(y) - 1$ and $x = 1 \times \beta^{e(y) - 1}$ because, otherwise, i.e., if $e(x) < e(y) - 1$, $x + y$ would round to $y$ instead of $z$.

Therefore, we have $z-x = (1 + \nicefrac{1}{2}) \times \beta^{e(y)}$, which means that $w = fl(z-x)$ can be either $1 \times \beta^{e(y)}$ or $1 \times \beta^{e(y)+1}$, depending on the tie-breaking rule. 
Therefore, $y-w$ can only take value $0$ or $-1 \times \beta^{e(y)}$, both of which are in $\mathbb D$. 
%
Hence, we arrive at the desired result $z1 = y - w$.

\hfill {\bfseries{\itshape End of Remark} \ref*{rmk:6opEFT_Case4_t=1}.}
\end{remark}

The following lemma is invoked for Case 3 in the proof of Theorem \ref{thm:6opEFTadditionDekker}.
\begin{lemma}
\label{lemma:numbersWithinInterval[-1/2,1/2]:e(y)>e(x)ANDyBeingNormal}
With $\beta=2$ and nearest rounding, given 
$y = m(y) \times \beta^{e(y)} \in \mathbb D$ 
with $\beta^{t-1} \leq \vert m(y) \vert < \beta^t$ and $e(y) > E_{\min}$, 
the rounding outcome of a real number 
$\tilde{w} \in \left[ y - \tfrac{1}{2} \times \beta^{e(y)} , 
                   y + \tfrac{1}{2} \times \beta^{e(y)} \right]$ 
admits a normal representation with exponent $e(y)-1$, $e(y)$, or $e(y)+1$.
Moreover, the difference $y-fl(\omega)$ can take values $\pm 1 \times \beta^{e(y)-1}$, $0$, or $\pm 1 \times \beta^{e(y)}$ only.
\\
\normalfont
{\bf Proof.}
We have $y \neq 0$ since $\vert m(y) \vert \geq \beta^{t-1}$. 
Moreover, it suffices to consider only the case $y>0$ because of symmetry.

We first address the special case $t=1$, where the mantissa can only take value 1.
We can relax the interval to 
$\left[ y - \tfrac{1}{2} \times \beta^{e(y)} , 
        y + 1            \times \beta^{e(y)} \right]$,
which becomes $\left[ 1 \times \beta^{e(y)-1} , 1 \times \beta^{e(y)+1} \right]$ in this case.
Since both endpoints of the above interval are in $\mathbb D$, 
by Lemma \ref{lemma:Monotonicity} (Monotonicity), 
we have that $\tilde{w}$ can only round to one of the following three floating point numbers:
\begin{equation}
\label{eqn:numbersOmegaCanRoundto:[-1/2,1/2]:t=1}
\left[ 1 \times \beta^{e(y)-1} , 1 \times \beta^{e(y)+1} \right] \intersect \mathbb D
=
\left\{ 
1 \times \beta^{e(y)-1}, 
1 \times \beta^{e(y)}, 
1 \times \beta^{e(y)+1} 
\right\}.
\end{equation}
Consequently, the difference $y - fl(\tilde{w})$ can only take values 
\begin{equation}
\label{eqn:numbers(y-w)canTake:[-1/2,1/2]:t=1}
\left\{ 
  1 \times \beta^{e(y)-1}, \enskip
  0, \enskip
- 1 \times \beta^{e(y)} 
\right\}.
\end{equation}
{\it We note here that the last member of \eqref{eqn:numbersOmegaCanRoundto:[-1/2,1/2]:t=1}, i.e., $1 \times \beta^{e(y)+1}$, may overflow. In that case, $\tilde{w}$ can only round to the first two members of \eqref{eqn:numbersOmegaCanRoundto:[-1/2,1/2]:t=1} in Dekker's system.}

With the above special treatment for $t=1$, we assume $t>1$ and separate the discussion below based on the size of $m(y)$.

\begin{adjustwidth}{1.5em}{}
\setlength{\parindent}{0em}
\textcolor{kwdcolor}{Case 1: if $m(y) = \beta^{t-1}$}, we can relax the interval to 
$\left[ y - \tfrac{1}{2} \times \beta^{e(y)} , 
        y + 1            \times \beta^{e(y)} \right]$, 
which becomes
$\left[ \big(\beta^t     - 1 \big) \times \beta^{e(y)-1} , 
        \big(\beta^{t-1} + 1 \big) \times \beta^{e(y)} \right]$ in this case, 
with both endpoints in $\mathbb D$. 
By monotonicity, $\tilde{w}$ can only round to one of the following three floating point numbers:
\begin{equation}
\label{eqn:numbersOmegaCanRoundto:[-1/2,1/2]:m(y)isSmallest}
\left[ \big(\beta^t     - 1 \big) \times \beta^{e(y)-1} , 
       \big(\beta^{t-1} + 1 \big) \times \beta^{e(y)} \right] \intersect \mathbb D
=
\left\{ 
\big(\beta^t - 1 \big) \times \beta^{e(y)-1}, 
\beta^{t-1} \times \beta^{e(y)}, 
\big(\beta^{t-1} + 1 \big) \times \beta^{e(y)} 
\right\}.
\end{equation}
Consequently, the difference $y - fl(\tilde{w})$ can only take values
\begin{equation}
\label{eqn:numbers(y-w)canTake:[-1/2,1/2]:m(y)isSmallest}
\left\{ 
  1 \times \beta^{e(y)-1}, \enskip
  0, \enskip
- 1 \times \beta^{e(y)}
\right\}.
\end{equation}

\textcolor{kwdcolor}{Case 2: if $\beta^{t-1} < m(y) < \beta^{t} - 1$}, we can relax the interval to 
$\left[ y - 1 \times \beta^{e(y)} , 
        y + 1 \times \beta^{e(y)} \right]$, 
which becomes 
$\left[ \big( m(y) - 1 \big) \times \beta^{e(y)} , 
        \big( m(y) + 1 \big) \times \beta^{e(y)} \right]$ in this case, 
with both endpoints in $\mathbb D$. 
By monotonicity, $\tilde{w}$ can only round to one of the following three floating point numbers: 
\begin{equation}
\label{eqn:numbersOmegaCanRoundto:[-1/2,1/2]:m(y)isInTheInterior}
\left[ \big( m(y) - 1 \big) \times \beta^{e(y)} , 
       \big( m(y) + 1 \big) \times \beta^{e(y)} \right] \intersect \mathbb D
=
\left\{ 
\big( m(y) - 1 \big) \times \beta^{e(y)}, 
      m(y)           \times \beta^{e(y)}, 
\big( m(y) + 1 \big) \times \beta^{e(y)} 
\right\}.
\end{equation}
Consequently, the difference $y - fl(\tilde{w})$ can only take values
\begin{equation}
\label{eqn:numbers(y-w)canTake:[-1/2,1/2]:m(y)isInTheInterior}
\left\{ 
  1 \times \beta^{e(y)}, \enskip
  0, \enskip
- 1 \times \beta^{e(y)}
\right\}.
\end{equation}
{\it We note here that if $t=1$ or $2$, we would have $\beta^{t-1} + 1 \geq \beta^{t} - 1$, leading to a null case here.
}

\textcolor{kwdcolor}{Case 3: if $m(y) = \beta^{t} - 1$}, we can relax the interval to 
$\left[ y - 1 \times \beta^{e(y)} , 
        y + 1 \times \beta^{e(y)} \right]$, 
which becomes 
$\left[ \big(\beta^t - 2 \big) \times \beta^{e(y)} , 
                   \beta^{t-1} \times \beta^{e(y)+1} \right]$ in this case, 
with both endpoints in $\mathbb D$. 
By monotonicity, $\tilde{w}$ can only round to one of the following three floating point numbers: 
\begin{equation}
\label{eqn:numbersOmegaCanRoundto:[-1/2,1/2]:m(y)isLargest}
\left[ \big(\beta^t - 2 \big) \times \beta^{e(y)} , 
       \big(\beta^{t-1} \big) \times \beta^{e(y)+1} \right] \intersect \mathbb D 
= 
\left\{ 
\big(\beta^t - 2 \big) \times \beta^{e(y)}, 
\big(\beta^t - 1 \big) \times \beta^{e(y)}, 
     \beta^{t-1}       \times \beta^{e(y)+1} 
\right\}.
\end{equation}
Consequently, the difference $y - fl(\tilde{w})$ can only take values
\begin{equation}
\label{eqn:numbers(y-w)canTake:[-1/2,1/2]:m(y)isLargest}
\left\{ 
  1 \times \beta^{e(y)}, \enskip
  0, \enskip 
- 1 \times \beta^{e(y)} 
\right\}.
\end{equation}
{\it We note here that the last member of \eqref{eqn:numbersOmegaCanRoundto:[-1/2,1/2]:m(y)isLargest}, i.e., $\beta^{t-1} \times \beta^{e(y)+1}$, may overflow. In that case, $\tilde{w}$ can only round to the first two members of \eqref{eqn:numbersOmegaCanRoundto:[-1/2,1/2]:m(y)isLargest} in Dekker's system.}
\end{adjustwidth}

\hfill {\bf End of proof for Lemma \ref{lemma:numbersWithinInterval[-1/2,1/2]:e(y)>e(x)ANDyBeingNormal}.}
\end{lemma}

The following lemma is invoked for Case 4 in the proof of Theorem \ref{thm:6opEFTadditionDekker}. It relaxes the interval to a larger one but excludes the special case where $t=1$. 
\begin{lemma}
\label{lemma:numbersWithinInterval[-1,1]:e(y)>e(x)ANDyBeingNormal}
With $\beta = 2$, $t > 1$, and nearest rounding, 
given $y = m(y) \times \beta^{e(y)} \in \mathbb D$ 
with $\beta^{t-1} \leq \vert m(y) \vert < \beta^t$ and $e(y) > E_{\min}$, 
the rounding outcome of a real number 
$\tilde{w} \in \left[ y - 1 \times \beta^{e(y)} , y + 1 \times \beta^{e(y)} \right]$ admits a normal representation with exponent $e(y)-1$, $e(y)$, or $e(y)+1$. 
Moreover, the difference $y-fl(\tilde{w})$ can take values $\pm 1 \times \beta^{e(y)-1}$, $0$, or $\pm 1 \times \beta^{e(y)}$ only. \\
\normalfont
{\bf Proof.}
The proof mostly follows that of Lemma \ref{lemma:numbersWithinInterval[-1/2,1/2]:e(y)>e(x)ANDyBeingNormal}.
As in Lemma \ref{lemma:numbersWithinInterval[-1/2,1/2]:e(y)>e(x)ANDyBeingNormal}, it suffices to consider only the case $y>0$.
%

\begin{adjustwidth}{1.5em}{}
\setlength{\parindent}{0em}
\textcolor{kwdcolor}{Case 1: if $m(y) = \beta^{t-1}$}, the interval becomes 
$\left[ \big(\beta^t     - \beta \big) \times \beta^{e(y)-1} , 
        \big(\beta^{t-1} + 1     \big) \times \beta^{e(y)} \right]$, 
with both endpoints in $\mathbb D$. 
By monotonicity, $\tilde{w}$ can only round to one of the following $\beta+2$ (i.e., $4$ since $\beta=2$) floating point numbers between the two endpoints:
\begin{equation}
\label{eqn:numbersOmegaCanRoundto:[-1,1]:m(y)isSmallest}
\left\{ 
\big(\beta^t - 2 \big) \times \beta^{e(y)-1}, \enskip
\big(\beta^t - 1 \big) \times \beta^{e(y)-1}, \enskip
\beta^{t-1} \times \beta^{e(y)}, \enskip
\big(\beta^{t-1} + 1 \big) \times \beta^{e(y)} 
\right\}.
\end{equation}
Consequently, the difference $y - fl(\tilde{w})$ can only take values
\begin{equation}
\label{eqn:numbers(y-w)canTake:[-1,1]:m(y)isSmallest}
\left\{ 
  1 \times \beta^{e(y)}, \enskip
  1 \times \beta^{e(y)-1}, \enskip
  0, \enskip 
- 1 \times \beta^{e(y)} 
\right\}.
\end{equation}

\textcolor{kwdcolor}{Case 2: if $\beta^{t-1} < m(y) < \beta^{t} - 1$}, this case has already been covered in the proof of Case 2 of Lemma \ref{lemma:numbersWithinInterval[-1/2,1/2]:e(y)>e(x)ANDyBeingNormal}.

\textcolor{kwdcolor}{Case 3: if $m(y) = \beta^{t} - 1$}, this case has already been covered in the proof of Case 3 of Lemma \ref{lemma:numbersWithinInterval[-1/2,1/2]:e(y)>e(x)ANDyBeingNormal}.
\end{adjustwidth}

\hfill {\bf End of proof for Lemma \ref{lemma:numbersWithinInterval[-1,1]:e(y)>e(x)ANDyBeingNormal}.}
\end{lemma}

Before proceeding, we introduce the notation $\epsilon$ for the quantity {\it unit round-off}.
Given a floating point number system with $t$-digit mantissa, $\epsilon$ is defined as $\epsilon = \tfrac{1}{2}\cdot\beta^{1-t}$ under nearest rounding. 
The following lemma regarding $\epsilon$ can be established.
%


\begin{lemma}
\label{lemma:bound_on_rounding_with_rounding_output} 
For $\tilde{z} \in \mathbb R$ that rounds to $z = m(z) \times \beta^{e(z)}$ in $\mathbb D$, if $\vert \tilde{z} \vert \leq (M-1)\times\beta^{E{\max}}$, i.e., $\tilde{z}$ does not overflow, we have $\vert z - \tilde{z} \vert \leq \tfrac{1}{2} \times \beta^{e(z)}$.
Moreover, when $z$ is not a subnormal number, 
we also have $\vert z - \tilde{z} \vert \leq \epsilon \vert z \vert$.
%
\\
\normalfont
{\bf Proof.}
Expressing $\tilde{z}$ as $\tilde{z} = \zeta \times \beta^{e(z)}$, we have $m(z) = round(\zeta)$ following Lemma \ref{lemma:relationBetweenFloatingPointRoundingAndMantissaIntegerRounding} and, therefore, 
$\vert m(z) - \zeta \vert \leq \tfrac{1}{2}$.
It follows that 
$\vert z - \tilde{z} \vert = 
\vert m(z) - \zeta \vert \times \beta^{e(z)} \leq 
\tfrac{1}{2} \times \beta^{e(z)}$.

Notice that the result above 
holds for any valid representation of $z$. 
When $z$ is not a subnormal number, working with its normal representation $m(z) \times \beta^{e(z)}$ where $\vert m(z) \vert \geq \beta^{t-1}$, we have 
$\vert z - \tilde{z} \vert \leq 
\tfrac{1}{2} \times \beta^{e(z)} = 
\tfrac{1}{2} \cdot \beta^{1-t} \cdot \beta^{t-1} \times \beta^{e(z)} \leq 
\epsilon \vert z \vert
$.

\hfill {\bf End of proof for Lemma \ref*{lemma:bound_on_rounding_with_rounding_output}.}
\end{lemma}

Lemma \ref{lemma:bound_on_rounding_with_rounding_output} provides a statement regarding rounding in general.
If the number to be rounded, i.e., $\tilde{z}$, is the intermediate result from an addition (or subtraction) operation, the additional requirement that {\it $z$ is not a subnormal number} can be dropped from the statement, which is summarized below.

\begin{lemma}
\label{lemma:bound_on_rounding_with_rounding_output_addition_special} 
Given $x,y \in \mathbb D$ and $\tilde{z} = x + y \in \mathbb R$, which rounds to $z = m(z) \times \beta^{e(z)}$ in $\mathbb D$, 
if $\vert \tilde{z} \vert \leq (M-1)\times\beta^{E{\max}}$, i.e., $\tilde{z}$ does not overflow, we have $\vert z - \tilde{z} \vert \leq \tfrac{1}{2} \times \beta^{e(z)}$ and $\vert z - \tilde{z} \vert \leq \epsilon \vert z \vert$.
\\
\normalfont
{\bf Proof.}
It suffices to consider only the case when $z$ is a subnormal number. 
In this case, we have $z = x + y = \tilde{z}$ from Lemma \ref{lemma:additionIsExactWhenEmin} and, hence, the stated results still hold.

\hfill {\bf End of proof for Lemma \ref*{lemma:bound_on_rounding_with_rounding_output_addition_special}.}
\end{lemma}

The following corollary results from a straightforward application of Lemma \ref{lemma:bound_on_rounding_with_rounding_output_addition_special} by observing that $zz = x+y - z = x+y - fl(x+y)$ when the conditions of Theorem \ref{thm:3opEFTadditionDekker} or Theorem \ref{thm:6opEFTadditionDekker} are met.
\begin{corollary}
\label{col:bound_zz_by_exponent_e(z)}
Under the conditions of Theorem \ref{thm:3opEFTadditionDekker} or the conditions of Theorem \ref{thm:6opEFTadditionDekker} and assuming overflow does not happen when forming $z$, we have $\vert zz \vert \leq \tfrac{1}{2} \times \beta^{e(z)}$ and $\vert zz \vert \leq \epsilon \vert z \vert$.
\end{corollary}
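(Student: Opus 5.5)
The plan is to reduce the corollary to Lemma \ref{lemma:bound_on_rounding_with_rounding_output_addition_special}. That lemma already gives both bounds for the quantity $x+y-fl(x+y)$. So all that remains is to identify $zz$ with this quantity under the stated hypotheses.

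First, I would invoke Theorem \ref{thm:3opEFTadditionDekker} (when $e(x) \geq e(y)$ for some admissible representations) or Theorem \ref{thm:6opEFTadditionDekker} (for arbitrary $x, y \in \mathbb D$ with $\beta=2$). Either one yields the exact identity $z + zz = x + y$, which holds in $\mathbb R$. Since in both algorithms $z = fl(x+y)$ is produced by the first operation, this gives $zz = (x+y) - fl(x+y) = \tilde{z} - z$ with $\tilde{z} = x+y$. Hence $\vert zz \vert = \vert z - \tilde{z} \vert$.

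Second, the assumption that overflow does not happen when forming $z$ means exactly $\vert \tilde{z} \vert \leq (M-1)\times\beta^{E_{\max}}$. This is the hypothesis of Lemma \ref{lemma:bound_on_rounding_with_rounding_output_addition_special}, and $\tilde{z}$ is a sum of two elements of $\mathbb D$, as that lemma requires. The lemma then gives $\vert z - \tilde{z} \vert \leq \tfrac{1}{2}\times\beta^{e(z)}$ for the representation of $z$ under consideration. It also gives $\vert z - \tilde{z} \vert \leq \epsilon\vert z \vert$, with no subnormality caveat, since a subnormal $z$ forces exact addition by Lemma \ref{lemma:additionIsExactWhenEmin}. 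Substituting $\vert zz \vert$ for $\vert z - \tilde{z} \vert$ finishes the argument.

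The only point needing care is that the first bound depends on which representation of $z$ is used, since $e(z)$ is not unique in $\mathbb D$. Lemma \ref{lemma:bound_on_rounding_with_rounding_output} holds for any valid representation $m(z)\times\beta^{e(z)}$, because it rests on Lemma \ref{lemma:relationBetweenFloatingPointRoundingAndMantissaIntegerRounding} applied to that representation. Therefore the corollary's bound holds for every representation of $z$, and in particular for the one used in the proofs of the EFT theorems. I expect no genuine obstacle; the remaining work is confirming that the non-overflow assumption transfers verbatim to the lemmas.
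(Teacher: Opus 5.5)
Your proposal is correct and matches the paper's argument: the paper also uses the EFT identity $z + zz = x + y$ from Theorem \ref{thm:3opEFTadditionDekker} or Theorem \ref{thm:6opEFTadditionDekker} to write $zz = (x+y) - fl(x+y)$, and then applies Lemma \ref{lemma:bound_on_rounding_with_rounding_output_addition_special} under the no-overflow assumption. Your remark that the first bound holds for every valid representation of $z$ agrees with the observation made in the proof of Lemma \ref{lemma:bound_on_rounding_with_rounding_output}.
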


Lemma \ref{lemma:bound_on_rounding_with_rounding_output_addition_special} 
bounds the error with the {\it output} of the floating point addition operation.
The lemma below bounds the error with the {\it inputs} of the addition operation.
\begin{lemma}
\label{lemma:bound_on_rounding_with_rounding_inputs} 
Given $x,y \in \mathbb D$ and $\tilde{z} = x + y \in \mathbb R$, which rounds to $z$, 
if $\vert \tilde{z} \vert \leq (M-1)\times\beta^{E{\max}}$, i.e., $\tilde{z}$ does not overflow, we have $\vert z - \tilde{z} \vert \leq \epsilon \vert \tilde{z} \vert$.
\\
\normalfont
{\bf Proof.} It suffices to consider only the case $\tilde{z} \geq 0$ since both $\mathbb D$ and $\mathbb R$ are symmetric. The two cases discussed below cover all possible outcomes of $x+y$ that do not lead to overflow. 

When $\tilde{z} \in \left[ 0 , \beta^t \times \beta^{E_{\min}} \right]$, we can express $\tilde{z}$ as $\zeta \times \beta^{E_{\min}}$, where $\zeta = m(x) \cdot \beta^{e(x)-E_{\min}} + m(y) \cdot \beta^{e(y)-E_{\min}}$.
Since $e(x),e(y) \geq E_{\min}$, we have that $\zeta \in \mathbb Z$ and hence can take values $0,1,2,\cdots,\beta^t$. Since overflow has been precluded, these possible options of $\zeta$ all lead to $\tilde{z} \in \mathbb D$ and hence $z - \tilde{z} = 0$.

When $\tilde{z} \in \left[ \beta^{t-1} \times \beta^{E} , \beta^t \times \beta^{E} \right]$ for $E_{\min} < E \leq E_{\max}$, 
noticing that the space between two adjacent floating point numbers in this interval is $1 \times \beta^E$ and since overflow has been precluded, 
we have $\vert z - \tilde{z} \vert \leq \tfrac{1}{2} \times \beta^E$ following nearest rounding and, therefore, 
$\vert z - \tilde{z} \vert \leq \tfrac{1}{2} \cdot \beta^{1-t} \cdot \beta^{t-1} \times \beta^E \leq \epsilon \vert \tilde{z} \vert$. 
%
(In this case, we actually have a strict inequality 
$\vert z - \tilde{z} \vert < \epsilon \vert \tilde{z} \vert$ because $z - \tilde{z}$ would be zero if $\tilde{z} = \beta^{t-1} \times \beta^{E}$.)

\hfill {\bf End of proof for Lemma \ref*{lemma:bound_on_rounding_with_rounding_inputs}.}
\end{lemma}

The following corollary results from a straightforward application of Lemma \ref{lemma:bound_on_rounding_with_rounding_inputs} by observing that $zz = x+y - z = x+y - fl(x+y)$ when the conditions of Theorem \ref{thm:3opEFTadditionDekker} or Theorem \ref{thm:6opEFTadditionDekker} are met.

\begin{corollary}
\label{col:bound_zz_by_(x+y)}
Under the conditions of Theorem \ref{thm:3opEFTadditionDekker} or the conditions of Theorem \ref{thm:6opEFTadditionDekker} and assuming overflow does not happen when forming $z$, we have $\vert zz \vert \leq \epsilon \vert \tilde{z} \vert$ where $\tilde{z} = x+y$.
\end{corollary}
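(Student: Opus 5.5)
The plan is to reduce the statement entirely to results already in hand. I would not redo any rounding analysis. The key observation is that, under the hypotheses of Theorem \ref{thm:3opEFTadditionDekker} or Theorem \ref{thm:6opEFTadditionDekker}, the computed correction term satisfies $zz = x + y - z$ exactly. The corollary is therefore a statement about the rounding error of a single floating point addition, $\tilde{z} - fl(\tilde{z})$ with $\tilde{z} = x+y$.

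The steps are as follows.

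First, invoke Theorem \ref{thm:3opEFTadditionDekker} (when $x,y$ admit representations with $e(x) \geq e(y)$) or Theorem \ref{thm:6opEFTadditionDekker} (in general, with $\beta=2$). Either one gives $z + zz = x + y$, hence $zz = \tilde{z} - z$ where $z = fl(\tilde{z})$. Both theorems hold with or without overflow, so the identity is available.

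Second, use the assumption that overflow does not happen when forming $z$. This means precisely $\vert \tilde{z} \vert \leq (M-1)\times\beta^{E_{\max}}$, which is the hypothesis of Lemma \ref{lemma:bound_on_rounding_with_rounding_inputs}.

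Third, apply Lemma \ref{lemma:bound_on_rounding_with_rounding_inputs} to obtain $\vert z - \tilde{z} \vert \leq \epsilon \vert \tilde{z} \vert$. Substituting $\vert zz \vert = \vert z - \tilde{z} \vert$ gives $\vert zz \vert \leq \epsilon \vert x + y \vert$, as claimed.

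The only point needing care is to make sure the hypotheses line up. Lemma \ref{lemma:bound_on_rounding_with_rounding_inputs} requires $x, y \in \mathbb D$, nearest rounding, and no overflow. All of these are supplied by the theorems' assumptions (Rule \ref{Rule:AsIf} plus nearest rounding) together with the corollary's explicit no-overflow assumption. Note also that $\epsilon$ is defined for nearest rounding, which matches.

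I expect no real obstacle, since the substance lies in the lemma. Were I to reprove that lemma inline, the delicate step would be the range $\tilde{z} \in [0, \beta^{t}\times\beta^{E_{\min}}]$. There relative error bounds fail in general, and I would instead show exactness: $\tilde{z}$ is an integer multiple of $\beta^{E_{\min}}$ and hence lies in $\mathbb D$. The normal range is handled by the standard half-spacing bound $\tfrac{1}{2}\beta^{E} \leq \epsilon \,\beta^{t-1}\beta^{E} \leq \epsilon\vert\tilde{z}\vert$.
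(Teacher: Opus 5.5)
Your proposal is correct and follows the paper's argument: the EFT identity $z + zz = x + y$ from Theorem~\ref{thm:3opEFTadditionDekker} or Theorem~\ref{thm:6opEFTadditionDekker} gives $zz = \tilde{z} - fl(\tilde{z})$, and Lemma~\ref{lemma:bound_on_rounding_with_rounding_inputs}, applied under the no-overflow assumption, then yields $\vert zz \vert \leq \epsilon \vert \tilde{z} \vert$. Your sketch of how that lemma would be reproved matches the paper's own proof of it: exactness when $\tilde{z} \in [0, \beta^{t} \times \beta^{E_{\min}}]$, and the half-spacing bound above that range.
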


\section{Application of the EFTs in summing a sequence of numbers}
\label{sec:compensatedSum}
The EFTs presented in Section \ref{sec:EFTunderAddition} can bring benefits in various applications; see \cite{quinlan1994round, shewchuk1997adaptive, chowdhary2022fast} for a few examples. 
One such application is summing a sequence of numbers, in particular, summing a sequence of numbers in the order that they are presented, which is referred to as recursive summation in Higham's work \cite{higham1993accuracy}; see also \cite[P.~80]{higham2002accuracy}.
Recursive summation is disadvantaged in accuracy when compared to the other summing strategies examined in \cite{higham1993accuracy} that require the control over the summing order.
However, for many applications, particularly those involving iterative procedures where the addends are produced on the fly, recursive summation is the only viable option.
%

The EFT from Theorem \ref{thm:3opEFTadditionDekker} can be used to improve the accuracy of recursive summation. 
This has been recognized in the early work of Kahan \cite{kahan1965pracniques}, which presented a technique often referred to as Kahan's summation or Kahan's compensated summation.
A proof about the error bound of Kahan's summation can be found in \cite[P.~244]{knuth1997art_vol2} and \cite{goldberg1991every}.
In the following, we will start with formalizing the recursive summation problem in Section \ref{sumSequenceFormulation} and summarizing Kahan's compensated summation algorithm in Section \ref{sec:sumSequence3op}. A natural extension of Kahan's summation is examined in Section \ref{sec:sumSequence6op}.

More nuanced versions of compensated summation algorithms are then presented and analyzed in Sections \ref{sec:sumSequenceDouble6op} and \ref{sec:sumSequenceTriple6op}. 
These more nuanced versions 
enjoy tighter error bounds, 
which can be particularly beneficial for designing validation tests for computer systems.
%
Thanks to the adoption of Dekker's system and the theoretic results developed above, 
the proofs provided below are simple to follow and 
cover 
subnormal numbers naturally.
For the remainder of this section, we assume nearest rounding and no overflow.

\subsection{Formulation of the problem}
\label{sumSequenceFormulation}
We want to carry out the calculation of the mathematical expression $\tilde{s} = x_1 + x_2 + \cdots x_n$ on a computer, where $\tilde{s}$ denotes the exact outcome of the expression if it is carried out in the real number system. 
All addends $x_1 \cdots x_n$ are assumed to be representable in the floating point number system.
We do {\it not} assume any special relationship among the signs or magnitudes of the addends.
The procedure of recursive summation is illustrated in Algorithm \ref{alg:recursive_summation}.

\renewcommand{\lstlistingname}{Algorithm}
\renewcommand{\lstlistlistingname}{List of \lstlistingname s}

\begin{center}
\begin{minipage}{0.6\linewidth}
\centering
\captionsetup[lstlisting]{font=small}
\begin{lstlisting}[
tabsize=4,
basicstyle=\ttfamily\small,
xleftmargin=8em,
caption={recursive summation.},
label={alg:recursive_summation},
]
s = 0;
for i = 1:n
    s = s + x[i];
end
\end{lstlisting}
\end{minipage}
\end{center}
In Algorithm \ref{alg:recursive_summation}, the notation \texttt{1:n} denotes a sequence from \texttt{1} to \texttt{n} with increment \texttt{1}.
Error analysis for the recursive summation is straightforward and can be found in, e.g., \cite{higham2002accuracy}.
Using $s_i$ to denote the running sum at each iteration, we have, following Lemma \ref{lemma:bound_on_rounding_with_rounding_inputs}, 
\begin{equation}
\label{runningExpressionsRecursiveSummation}
\begin{array}{rcl}
s_1 & = & x_1(1+\delta_1) \\
s_2 & = & x_1(1+\delta_1)(1+\delta_2) + x_2(1+\delta_2) \\
& \vdots & \\
s_n & = & x_1 \prod_{i=1}^{n}(1+\delta_i)
        + x_2 \prod_{i=2}^{n}(1+\delta_i)
        + \cdots 
        + x_n \prod_{i=n}^{n}(1+\delta_i),
\end{array}
\end{equation}
where $\vert \delta_i \vert \leq \epsilon$ is the relative error committed at iteration $i$ and $s_n$ is the output of Algorithm \ref{alg:recursive_summation}.

\begin{remark}[Conciseness]
\label{rmk:conciseness_in_error_bounds}
We note here that $\delta_1$ from \eqref{runningExpressionsRecursiveSummation} should be zero for any reasonable machine since we expect the calculation of $(0+x_1)$ to incur {\it no} rounding error.
However, as noted in \cite{higham2002accuracy}, ``$\cdots$we should not attach too much significance to the precise values of the constants in error bounds''
and, from the same work, 
``It is worth spending effort, though, to put error bounds in a concise, easily interpreted form.''
Padding with the extra term $(1+\delta_1)$ here does make the error bounds more concise and easily interpretable.

\hfill {\bfseries{\itshape End of Remark} \ref*{rmk:conciseness_in_error_bounds}.}
\end{remark}

Invoking Lemma 3.1 of \cite{higham2002accuracy}, we arrive at the following error bound:
\begin{equation}
\label{forwardErrorBoundRecursiveSummation}
\vert s_n - \tilde{s} \vert \leq \frac{n\epsilon}{1-n\epsilon} \textstyle \sum_{i=1}^n \vert x_i \vert \,,
\end{equation}
under the assumption $n\epsilon < 1$.
Two 
observations can be made on this error bound. First, assuming $n\epsilon \ll 1$, the error bound grows approximately linearly with $n$, the number of addends being summed. Second, \eqref{forwardErrorBoundRecursiveSummation} does not provide a sharp bound on the relative error when $\sum_{i=1}^n \vert x_i \vert \gg \left\vert \sum_{i=1}^n x_i \right\vert$.

\subsection{Recursive summation with 3op compensation}
\label{sec:sumSequence3op}
Kahan's summation method improves the error 
by using the 3op EFT to recover the error committed at each iteration and {\it compensating} for it at the next iteration. It is summarized in Algorithm \ref{alg:recursive_summation_3op}, where \texttt{add\_3op} implements the procedure from Theorem \ref{thm:3opEFTadditionDekker}.

\begin{center}
\begin{minipage}{0.6\linewidth}
\centering
\captionsetup[lstlisting]{font=small}
\begin{lstlisting}[
tabsize=4,
basicstyle=\ttfamily\small,
xleftmargin=6.5em,
caption={recursive summation with 3op compensation.},
label={alg:recursive_summation_3op},
]
s = 0;
e = 0;
for i = 1:n
    y = e + x[i];
    [s,e] = add_3op(s,y);
end
\end{lstlisting}
\end{minipage}
\end{center}
The following error bound for Algorithm \ref{alg:recursive_summation_3op} is given in \cite[P.~244]{knuth1997art_vol2} and \cite{goldberg1991every} by tracking the leading order term:
\begin{equation}
\label{forwardErrorBoundRecursiveSummation3op}
\vert s_n - \tilde{s} \vert 
\leq 
\left(2\epsilon + \mathcal{O}(n\epsilon^2) \right) \textstyle \sum_{i=1}^{n} \vert x_i \vert.
\end{equation}
We note here that compared to the bound given in \eqref{forwardErrorBoundRecursiveSummation}, this error bound does {\it not} grow linearly with $n$ so long as the high order terms are negligible, but still does not bound the relative error sharply when $\sum_{i=1}^n \vert x_i \vert \gg \left\vert \sum_{i=1}^n x_i \right\vert$.

\subsection{Recursive summation with 6op compensation}
\label{sec:sumSequence6op}

The procedure \texttt{add\_3op} in Algorithm \ref{alg:recursive_summation_3op} can be replaced by the 6op version of the EFT presented in Theorem \ref{thm:6opEFTadditionDekker}, resulting in Algorithm \ref{alg:recursive_summation_6op} below.
\begin{center}
\begin{minipage}{0.6\linewidth}
\centering
\captionsetup[lstlisting]{font=small}
\begin{lstlisting}[
tabsize=4,
basicstyle=\ttfamily\small,
xleftmargin=6.5em,
caption={recursive summation with 6op compensation.},
label={alg:recursive_summation_6op},
]
s = 0;
e = 0;
for i = 1:n
    y = e + x[i];
    [s,e] = add_6op(s,y);
end
\end{lstlisting}
\end{minipage}
\end{center}
Attaching subscript index $i$ to disambiguate variables from different iterations,
we can write
\begin{subequations}
\label{eqn:recursive_summation_6op_perturbations}
\begin{align}
y_i & = (1+\alpha_i) (e_{i-1} + x_i) \label{eqn:recursive_summation_6op_perturbations_y} \\
s_i & = (1+\beta_i ) (s_{i-1} + y_i) \label{eqn:recursive_summation_6op_perturbations_s} \\
e_i & =  - \beta_i   (s_{i-1} + y_i) \label{eqn:recursive_summation_6op_perturbations_e} 
\end{align}
\end{subequations}
with $\vert \alpha_i \vert, \vert \beta_i \vert \leq \epsilon$ following Lemma \ref{lemma:bound_on_rounding_with_rounding_inputs}.
The analysis for Algorithm \ref{alg:recursive_summation_6op} is considerably simpler and cleaner than that for Algorithm \ref{alg:recursive_summation_3op} because the EFT relation $s_i + e_i = s_{i-1} + y_i$ always holds.
The following recursive formula can be derived from \eqref{eqn:recursive_summation_6op_perturbations}
\begin{equation}
\label{eqn:recursive_summation_6op_recursive_relation}
s_i + e_i 
= s_{i-1} + y_i \\
= (1 - \alpha_i \beta_{i-1}) ( s_{i-1} + e_{i-1} ) + (1 + \alpha_i) x_i \, ,
\end{equation}
where we have used $e_{i-1} = - \beta_{i-1} (s_{i-1} + e_{i-1})$ in the derivation.
Applying \eqref{eqn:recursive_summation_6op_recursive_relation} recursively and noticing that $s_0 = e_0 = 0$, we arrive at
\begin{equation}
\label{eqn:recursive_summation_6op_full_characterization}
\textstyle
s_n + e_n = \sum_{k=1}^n \left\{ \prod_{i=k+1}^{n} (1 - \alpha_i \beta_{i-1}) \right\} (1 + \alpha_k) x_k,
\end{equation}
where $\prod_{i={n+1}}^n$ evaluates to $1$.
Together with \eqref{eqn:recursive_summation_6op_perturbations_s} and
\eqref{eqn:recursive_summation_6op_perturbations_e}, \eqref{eqn:recursive_summation_6op_full_characterization} gives a complete characterization of the error behavior for Algorithm \ref{alg:recursive_summation_6op}.

Under the moderate assumption that $n\epsilon^2 < 1$ and using Lemma 3.1 of \cite{higham2002accuracy}, we can write
\begin{equation}
\label{eqn:recursive_summation_6op_full_characterization_using_theta}
\textstyle
s_n + e_n = \sum_{k=1}^n (1 + \theta_{n-k}) (1 + \alpha_k) x_k ,
\end{equation}
where $\vert \theta_m \vert \leq \frac{m\epsilon^2}{1 - m\epsilon^2}$ for $m = 0, \ldots, n$, which leads to the error bound below
\begin{subequations}
\label{eqn:recursive_summation_6op_sn+en_error_bound}
\begin{align}
\textstyle 
\left\vert s_n + e_n - \tilde{s} \right\vert 
& \textstyle 
= \left\vert \sum_{k=1}^n \alpha_k x_k + \theta_{n-k} x_k + \theta_{n-k} \alpha_k x_k \right\vert 
\label{eqn:recursive_summation_6op_sn+en_error_bound_a} \\
& \textstyle 
\leq \epsilon \sum_{k=1}^n \vert x_k \vert 
+ \sum_{k=1}^n \frac{(n-k)\epsilon^2}{1-(n-k)\epsilon^2} \vert x_k \vert 
+ \sum_{k=1}^n \frac{(n-k)\epsilon^3}{1-(n-k)\epsilon^2} \vert x_k \vert 
\label{eqn:recursive_summation_6op_sn+en_error_bound_b} \\
& \textstyle
\leq \epsilon \sum_{k=1}^n \vert x_k \vert 
+ \frac{(n-1)\epsilon^2}{1-(n-1)\epsilon^2} \sum_{k=1}^n \vert x_k \vert 
+ \frac{(n-1)\epsilon^3}{1-(n-1)\epsilon^2} \sum_{k=1}^n \vert x_k \vert \, . 
\label{eqn:recursive_summation_6op_sn+en_error_bound_c}
\end{align}
\end{subequations}
We note here that \eqref{eqn:recursive_summation_6op_sn+en_error_bound_c} adheres well to the principle of putting ``error bounds in a concise, easily interpreted form'' and is perhaps as informative as \eqref{eqn:recursive_summation_6op_sn+en_error_bound_b} for most application scenarios. 
One possible exception is when using compensated summation to verify hardware or software correctness, where a more precise bound such as \eqref{eqn:recursive_summation_6op_sn+en_error_bound_b} may bring additional value.

For some use cases, if we have the luxury to keep both $s_n$ and $e_n$ as the final result, then \eqref{eqn:recursive_summation_6op_sn+en_error_bound} already provides a useful error bound. 
If only $s_n$ is to be kept, we have from $s_n = (1 + \beta_n)(s_n + e_n)$ that
\begin{subequations}
\label{eqn:recursive_summation_6op_sn_error_bound}
\begin{align}
\vert s_n - \tilde{s} \vert
& \textstyle = \left\vert (1 + \beta_n)(s_n + e_n - \tilde{s}) + \beta_n\tilde{s} \right\vert 
\label{eqn:recursive_summation_6op_sn_error_bound_a} \\
& \textstyle \leq (1 + \epsilon) \left\{ \epsilon \sum_{k=1}^n \vert x_k \vert 
+ \frac{(n-1)\epsilon^2}{1-(n-1)\epsilon^2} \sum_{k=1}^n \vert x_k \vert 
+ \frac{(n-1)\epsilon^3}{1-(n-1)\epsilon^2} \sum_{k=1}^n \vert x_k \vert \right\} 
+ \epsilon \left\vert \sum_{k=1}^n x_k \right\vert \,.
\label{eqn:recursive_summation_6op_sn_error_bound_b}
\end{align}
\end{subequations}
We note here that the second term in the error bound above has the summation of addends inside the absolute value. 
The leading order term in the error bound above remains the same as in \eqref{eqn:recursive_summation_6op_sn+en_error_bound_c} for $\left\vert s_n + e_n - \tilde{s} \right\vert$, i.e., either $\mathcal{O}(\epsilon)$ or $\mathcal{O}(n \epsilon^2)$ depending on the size of $n \epsilon$.

\begin{remark}[$n$ and $\epsilon$]
\label{rmk:significance_of_n_epsilon^2}
We note here that the results \eqref{eqn:recursive_summation_6op_sn+en_error_bound} and \eqref{eqn:recursive_summation_6op_sn_error_bound} are derived under the assumption that $n\epsilon^2 < 1$.
For modern {\it large scale} applications potentially executed at {\it low precision}, where both $n$ and $\epsilon$ can be quite large, this is a significant improvement from the standard assumption $n\epsilon < 1$
in terms of the applicability of the results.
Indeed, the concern on the magnitude of $n$ and $\epsilon$ has been recognized and motivated the work on probabilistic error bounds; see, e.g., \cite{higham2019new}.

\hfill {\bfseries{\itshape End of Remark} \ref*{rmk:significance_of_n_epsilon^2}.}
\end{remark}

\begin{remark}[3op vs 6op]
\label{rmk:cost_on_modern_hardware}
On modern hardware, floating point operation count often has little impact on performance; it is the data movement that tends to dominate both the execution time and energy consumption. 
Time spent on small amounts of floating point operations can often be hidden entirely by data movement due to overlapping executions.
The additional floating point operation count in Algorithm \ref{alg:recursive_summation_6op} is therefore not a serious disadvantage compared to Algorithm \ref{alg:recursive_summation_3op}, given that they both involve the same additional variable $e$, whereas $y$ and those appearing inside \texttt{add\_3op} and \texttt{add\_6op} can be treated as temporary variables from the perspective of program translation and execution.

\hfill {\bfseries{\itshape End of Remark} \ref*{rmk:cost_on_modern_hardware}.}
\end{remark}

\subsection{Recursive summation with {\it double} 6op compensation} 
\label{sec:sumSequenceDouble6op}
Algorithm \ref{alg:recursive_summation_double_6op} improves on Algorithm \ref{alg:recursive_summation_6op} based on the observation that the {\it compensation} operation therein (i.e., line 4) 
is not compensated itself and carries an $\epsilon$ error, i.e., $\alpha_i$ in \eqref{eqn:recursive_summation_6op_perturbations_y} and \eqref{eqn:recursive_summation_6op_recursive_relation}. 
In Algorithm \ref{alg:recursive_summation_double_6op}, 
the operation $w = e + v$ (i.e., line 5) is still not compensated; but since both addends in this operation are errors, the error introduced when adding them is expected to be second order in nature.

\begin{center}
\begin{minipage}{0.6\linewidth}
\centering
\captionsetup[lstlisting]{font=small}
\begin{lstlisting}[
tabsize=4,
basicstyle=\ttfamily\small,
xleftmargin=6em,
caption={recursive summation with double 6op compensation.},
label={alg:recursive_summation_double_6op},
]
s = 0;
e = 0;
for i = 1:n
    [t,v] = add_6op(s,x[i]);
       w  = e + v;
    [s,e] = add_6op(t,w);
end
\end{lstlisting}
\end{minipage}
\end{center}

The analysis of Algorithm \ref{alg:recursive_summation_double_6op} follows the same strategy as that for Algorithm \ref{alg:recursive_summation_6op}. We start by deriving a recursive relation.
Attaching subscript index $i$ to disambiguate variables from different iterations,
we can write
\begin{subequations}
\label{eqn:recursive_summation_double_6op_perturbations}
\begin{align}
t_i & = (1+\alpha_i) (s_{i-1} + x_i) \label{eqn:recursive_summation_double_6op_perturbations_t} \\
v_i & =   -\alpha_i  (s_{i-1} + x_i) \label{eqn:recursive_summation_double_6op_perturbations_v} \\
w_i & = (1+\beta_i ) (e_{i-1} + v_i) \label{eqn:recursive_summation_double_6op_perturbations_w} \\
s_i & = (1+\gamma_i) (t_i     + w_i) \label{eqn:recursive_summation_double_6op_perturbations_s} \\
e_i & =  - \gamma_i  (t_i     + w_i) \label{eqn:recursive_summation_double_6op_perturbations_e} 
\end{align}
\end{subequations}
with $\vert \alpha_i \vert, \vert \beta_i \vert, \vert \gamma_i \vert \leq \epsilon$ following Lemma \ref{lemma:bound_on_rounding_with_rounding_inputs} and
\begin{subequations}
\label{eqn:recursive_summation_double_6op_recursive_relation}
\begin{align}
s_i + e_i 
& = t_i + w_i \\
& = t_i + e_{i-1} + v_i + \beta_i(e_{i-1} + v_i) \\
& = s_{i-1} + x_i + e_{i-1} + \beta_i e_{i-1} - \alpha_i \beta_i (s_{i-1} + x_i) 
\label{eqn:recursive_summation_double_6op_recursive_relation_c} \\
& = (s_{i-1} + e_{i-1}) + (1 - \alpha_i \beta_i) x_i - \beta_i \gamma_{i-1} (s_{i-1} + e_{i-1}) - \alpha_i \beta_i(1 + \gamma_{i-1}) (s_{i-1} + e_{i-1}) 
\label{eqn:recursive_summation_double_6op_recursive_relation_d} \\
& = (1 - \alpha_i\beta_i - \beta_i \gamma_{i-1} - \alpha_i\beta_i\gamma_{i-1}) (s_{i-1} + e_{i-1}) + (1 - \alpha_i \beta_i) x_i 
\label{eqn:recursive_summation_double_6op_recursive_relation_e} \,.
\end{align}
\end{subequations} 
From \eqref{eqn:recursive_summation_double_6op_recursive_relation_c} to \eqref{eqn:recursive_summation_double_6op_recursive_relation_d}, we used relations $e_{i-1} = - \gamma_{i-1}(s_{i-1} + e_{i-1})$ and $s_{i-1} = (1 + \gamma_{i-1})(s_{i-1} + e_{i-1})$.
Notice that the perturbation in the $x_i$ term above is of order $\epsilon^2$ instead of $\epsilon$ in \eqref{eqn:recursive_summation_6op_recursive_relation} for Algorithm \ref{alg:recursive_summation_6op}.

Applying \eqref{eqn:recursive_summation_double_6op_recursive_relation_e} recursively, we arrive at
\begin{equation}
\label{eqn:recursive_summation_double_6op_full_characterization}
\textstyle
s_n + e_n = \sum_{k=1}^n \left\{ \prod_{i=k+1}^n ( 1 - \alpha_i \beta_i - \beta_i \gamma_{i-1} - \alpha_i \beta_i \gamma_{i-1} ) \right\} (1 - \alpha_k \beta_k) x_k \,,
\end{equation}
where $\prod_{i=n+1}^n$ evaluates to $1$. Together with \eqref{eqn:recursive_summation_double_6op_perturbations_s} and \eqref{eqn:recursive_summation_double_6op_perturbations_e}, \eqref{eqn:recursive_summation_double_6op_full_characterization} gives a complete characterization of the error behavior for Algorithm \ref{alg:recursive_summation_double_6op}.

Introducing $\sigma_i = - \alpha_i \beta_i - \beta_i \gamma_{i-1} - \alpha_i \beta_i \gamma_{i-1}$ and $\tau_k = - \alpha_k \beta_k$ to simplify the notation, we have 
$\vert \sigma_i \vert \leq \sigma \coloneqq 2 \epsilon^2 + \epsilon^3$ for $i = 2, \ldots, n$ and $\vert \tau_k \vert \leq \tau \coloneqq \epsilon^2$ for $k = 1, \ldots, n$.
Under the assumption that $n \sigma < 1$ and using Lemma 3.1 of \cite{higham2002accuracy}, we can write
\begin{equation}
\label{eqn:recursive_summation_double_6op_full_characterization_using_theta}
\textstyle
s_n + e_n = \sum_{k=1}^n (1 + \theta_{n-k}) (1 + \tau_k) x_k \,,
\end{equation}
where $\vert \theta_m \vert \leq \frac{m\sigma}{1 - m\sigma}$ for $m = 0, \ldots, n$,
which leads to the error bound below
\begin{subequations}
\label{eqn:recursive_summation_double_6op_sn+en_error_bound}
\begin{align}
\vert s_n + e_n - \tilde{s} \vert
& \textstyle
= \left\vert \sum_{k=1}^n ( \tau_k x_k + \theta_{n-k} x_k + \theta_{n-k} \tau_k x_k ) \right\vert 
\label{eqn:recursive_summation_double_6op_sn+en_error_bound_a} \\
& \textstyle
\leq \tau \sum_{k=1}^n \vert x_k \vert 
+ \sum_{k=1}^n \frac{(n-k)\sigma}{1-(n-k)\sigma} \vert x_k \vert 
+ \sum_{k=1}^n \frac{(n-k)\sigma\tau}{1-(n-k)\sigma} \vert x_k \vert 
\label{eqn:recursive_summation_double_6op_sn+en_error_bound_b} \\
& \textstyle
\leq \tau \sum_{k=1}^n \vert x_k \vert 
+ \frac{(n-1)\sigma}{1-(n-1)\sigma} \sum_{k=1}^n \vert x_k \vert 
+ \frac{(n-1)\sigma\tau}{1-(n-1)\sigma} \sum_{k=1}^n \vert x_k \vert \,. 
\label{eqn:recursive_summation_double_6op_sn+en_error_bound_c}
\end{align}
\end{subequations}
Comparing \eqref{eqn:recursive_summation_double_6op_sn+en_error_bound_c} to \eqref{eqn:recursive_summation_6op_sn+en_error_bound_c}, 
the improvement in the bound comes from the first term, i.e., from $\epsilon \sum_{k=1}^n \vert x_k \vert$ to $\tau \sum_{k=1}^n \vert x_k \vert$, where $\tau = \epsilon^2$, 
which will lead to significant improvement in the overall bound when the first term in \eqref{eqn:recursive_summation_6op_sn+en_error_bound_c} is dominating, i.e., when $n \epsilon \ll 1$.
If focusing on the leading order term only, we have
\begin{equation}
\label{eqn:recursive_summation_double_6op_sn+en_error_bound_leading_order}
\textstyle
\vert s_n + e_n - \tilde{s} \vert \lesssim (2n - 1) \epsilon^2 \sum_{k=1}^n \vert x_k \vert \,.  
\end{equation}
If only $s_n$ is to be kept, we have from $s_n = (1 + \gamma_n)(s_n + e_n)$ that 
\begin{subequations}
\label{eqn:recursive_summation_double_6op_sn_error_bound}
\begin{align}
\vert s_n - \tilde{s} \vert
& \textstyle = \left\vert (1 + \gamma_n)(s_n + e_n - \tilde{s}) + \gamma_n\tilde{s} \right\vert 
\label{eqn:recursive_summation_double_6op_sn_error_bound_a} \\
& \textstyle \leq (1 + \epsilon) \left\{ \tau \sum_{k=1}^n \vert x_k \vert 
+ \frac{(n-1)\sigma}{1-(n-1)\sigma} \sum_{k=1}^n \vert x_k \vert 
+ \frac{(n-1)\sigma\tau}{1-(n-1)\sigma} \sum_{k=1}^n \vert x_k \vert \right\} 
+ \epsilon \left\vert \sum_{k=1}^n x_k \right\vert \,.
\label{eqn:recursive_summation_double_6op_sn_error_bound_b}
\end{align}
\end{subequations}
When $n \epsilon \ll 1$ or, putting it more precisely, 
when $n \epsilon^2 \sum_{k=1}^{n} \vert x_k \vert \lesssim \epsilon \left\vert \sum_{k=1}^{n} x_k \right\vert$, 
the leading order term in the error bound has improved from $\epsilon \sum_{k=1}^n \vert x_k \vert$ in \eqref{eqn:recursive_summation_6op_sn_error_bound_b} to $\epsilon \left\vert \sum_{k=1}^n x_k \right\vert$ in \eqref{eqn:recursive_summation_double_6op_sn_error_bound_b}.
In this case, 
\eqref{eqn:recursive_summation_double_6op_sn_error_bound_b} offers a relative error bound of $\epsilon$ on $\vert s_n - \tilde{s} \vert$, which indicates that Algorithm \ref{alg:recursive_summation_double_6op} can be viewed as carrying out the summation sequence with one rounding only, regardless of the size of $n$.

\subsection{Recursive summation with {\it triple} 6op compensation} 
\label{sec:sumSequenceTriple6op}
Algorithm \ref{alg:recursive_summation_triple_6op} is directly inspired by the work of Priest \cite{priest1992properties}, 
which uses three 3op compensations per addend and requires the addends to be ordered by decreasing magnitude.
As revealed by the analysis below, this ordering can be removed by replacing the 3op compensations with the 6op versions. 
The removal of this ordering requirement is important for use cases where the new addends are generated on the fly, possibly depending on the current summing result, e.g., the case of solution update in time stepping schemes or iterative solvers.

\begin{center}
\begin{minipage}{0.6\linewidth}
\centering
\captionsetup[lstlisting]{font=small}
\begin{lstlisting}[
tabsize=4,
basicstyle=\ttfamily\small,
xleftmargin=6em,
caption={recursive summation with triple 6op compensation.},
label={alg:recursive_summation_triple_6op},
]
s = 0;
e = 0;
for i = 1:n
    [y,u] = add_6op(e,x[i]);
    [t,v] = add_6op(s,y);
       w  = u + v;
    [s,e] = add_6op(t,w);
end
\end{lstlisting}
\end{minipage}
\end{center}

The analysis of Algorithm \ref{alg:recursive_summation_triple_6op} follows the same strategy as those for Algorithms \ref{alg:recursive_summation_6op} and \ref{alg:recursive_summation_double_6op}. We start by deriving a recursive relation.
Attaching subscript index $i$ to disambiguate variables from different iterations,
we can write
\begin{subequations}
\label{eqn:recursive_summation_triple_6op_perturbations}
\begin{align}
y_i & = (1+\alpha_i) (e_{i-1} + x_i) \label{eqn:recursive_summation_triple_6op_perturbations_y} \\
u_i & =   -\alpha_i  (e_{i-1} + x_i) \label{eqn:recursive_summation_triple_6op_perturbations_u} \\
t_i & = (1+\beta_i ) (s_{i-1} + y_i) \label{eqn:recursive_summation_triple_6op_perturbations_t} \\
v_i & =   -\beta_i   (s_{i-1} + y_i) \label{eqn:recursive_summation_triple_6op_perturbations_v} \\
w_i & = (1+\gamma_i) (u_i     + v_i) \label{eqn:recursive_summation_triple_6op_perturbations_w} \\
s_i & = (1+\eta_i  ) (t_{i}   + w_i) \label{eqn:recursive_summation_triple_6op_perturbations_s} \\
e_i & =  - \eta_i    (t_{i}   + w_i) \label{eqn:recursive_summation_triple_6op_perturbations_e} 
\end{align}
\end{subequations}
with $\vert \alpha_i \vert, \vert \beta_i \vert, \vert \gamma_i \vert, \vert \eta_i \vert \leq \epsilon$ following Lemma \ref{lemma:bound_on_rounding_with_rounding_inputs} and
\begin{subequations}
\label{eqn:recursive_summation_triple_6op_recursive_relation}
\begin{align}
s_i + e_i 
& = t_i + w_i \\
& = t_i + u_i + v_i + \gamma_i(u_i + v_i) \\
& = s_{i-1} + y_i + u_i + \gamma_i(u_i + v_i) \\
& = s_{i-1} + e_{i-1} + x_i + \gamma_i(u_i + v_i) \\
& = (s_{i-1} + e_{i-1}) + x_i - \gamma_i[\alpha_i(e_{i-1} + x_i) + \beta_i(s_{i-1}+y_i)] \\
& = (s_{i-1} + e_{i-1}) + x_i - \gamma_i \alpha_i e_{i-1} - \gamma_i \alpha_i x_i 
  - \gamma_i \beta_i( s_{i-1} + e_{i-1} + \alpha_i e_{i-1} + x_i + \alpha_i x_i ) \\
& = (1 - \gamma_i\beta_i)(s_{i-1} + e_{i-1}) - (\gamma_i\alpha_i + \gamma_i\beta_i\alpha_i) e_{i-1} + (1 - \gamma_i\alpha_i - \gamma_i\beta_i - \gamma_i\beta_i\alpha_i) x_i
\label{eqn:recursive_summation_triple_6op_recursive_relation_g} \\
& = \left(1 - \gamma_i\beta_i + \gamma_i\alpha_i\eta_{i-1} + \gamma_i\beta_i\alpha_i\eta_{i-1}\right)(s_{i-1} + e_{i-1}) + (1 - \gamma_i\alpha_i - \gamma_i\beta_i - \gamma_i\beta_i\alpha_i)x_i \, .
%
\label{eqn:recursive_summation_triple_6op_recursive_relation_h}
\end{align}
\end{subequations}
From \eqref{eqn:recursive_summation_triple_6op_recursive_relation_g} to \eqref{eqn:recursive_summation_triple_6op_recursive_relation_h}, we used the relation $e_{i-1} = - \eta_{i-1} (s_{i-1} + e_{i-1})$.
Applying \eqref{eqn:recursive_summation_triple_6op_recursive_relation_h} recursively, we arrive at
\begin{equation}
\label{eqn:recursive_summation_triple_6op_full_characterization}
\textstyle
s_n + e_n = \sum_{k=1}^n\left\{ \prod_{i=k+1}^n \left( 1 - \gamma_i\beta_i + \gamma_i\alpha_i\eta_{i-1} + \gamma_i\beta_i\alpha_i\eta_{i-1} \right) \right\} 
\left(1 - \gamma_k\alpha_k - \gamma_k\beta_k - \gamma_k\beta_k\alpha_k\right) x_k,
\end{equation}
where $\prod_{i=n+1}^n$ evaluates to $1$. Together with \eqref{eqn:recursive_summation_triple_6op_perturbations_s} and \eqref{eqn:recursive_summation_triple_6op_perturbations_e}, \eqref{eqn:recursive_summation_triple_6op_full_characterization} gives a complete characterization of the error behavior for Algorithm \ref{alg:recursive_summation_triple_6op}.

Introducing $\sigma_i = - \gamma_i\beta_i + \gamma_i\alpha_i\eta_{i-1} + \gamma_i\beta_i\alpha_i\eta_{i-1}$ 
and $\tau_k = - \gamma_k\alpha_k - \gamma_k\beta_k - \gamma_k\beta_k\alpha_k$ to simplify the notation, 
we have $\vert \sigma_i \vert \leq \sigma \coloneqq \epsilon^2 + \epsilon^3 + \epsilon^4$ for $i=2, \ldots, n$ 
and $\vert \tau_k \vert \leq \tau \coloneqq 2 \epsilon^2 + \epsilon^3$ for $k=1,\ldots,n$.
Under the assumption that $n\sigma < 1$ and using Lemma 3.1 of \cite{higham2002accuracy}, we can write
\begin{equation}
\label{eqn:recursive_summation_triple_6op_full_characterization_using_theta}
\textstyle
s_n + e_n = \sum_{k=1}^n (1 + \theta_{n-k}) (1 + \tau_k) x_k \,,
\end{equation}
where $\vert \theta_m \vert \leq \frac{m\sigma}{1 - m\sigma}$ for $m = 0, \ldots, n$,
which leads to the error bound below
\begin{equation}
\label{eqn:recursive_summation_triple_6op_sn+en_error_bound}
\vert s_n + e_n - \tilde{s} \vert
\textstyle
\leq \tau \sum_{k=1}^n \vert x_k \vert 
+ \frac{(n-1)\sigma}{1-(n-1)\sigma} \sum_{k=1}^n \vert x_k \vert 
+ \frac{(n-1)\sigma\tau}{1-(n-1)\sigma} \sum_{k=1}^n \vert x_k \vert \,. 
\end{equation}
We note here that the bound given in \eqref{eqn:recursive_summation_triple_6op_sn+en_error_bound} has the same formula as \eqref{eqn:recursive_summation_double_6op_sn+en_error_bound_c}, but with $\sigma$ and $\tau$ taking new meanings.
If focusing on the leading order term only, we have 
\begin{equation}
\label{eqn:recursive_summation_triple_6op_sn+en_error_bound_leading_order}
\textstyle
\vert s_n + e_n - \tilde{s} \vert \lesssim (n+1) \epsilon^2 \sum_{k=1}^n \vert x_k \vert \,,
\end{equation}
which has the same order as the bound from \eqref{eqn:recursive_summation_double_6op_sn+en_error_bound_leading_order}, but with an improved constant.
When 
compared to \eqref{forwardErrorBoundRecursiveSummation}, this bound also indicates that Algorithm \ref{alg:recursive_summation_triple_6op} may be viewed as carrying out the summation as if we have used Algorithm \ref{alg:recursive_summation} but with a datatype that has twice the mantissa bits. 
Finally, if only $s_n$ is to be kept, the same error bound as expressed in \eqref{eqn:recursive_summation_double_6op_sn_error_bound_b} can be derived for $\vert s_n - \tilde{s} \vert$, with $\sigma$ and $\tau$ taking new meanings, and the same observation that follows \eqref{eqn:recursive_summation_double_6op_sn_error_bound_b} can be made when the bound is compared to \eqref{eqn:recursive_summation_6op_sn_error_bound_b}.

\section{Numerical examples}
\label{sec:numericalExamples}

\subsection{Pedagogical}
We first present a couple of 
pedagogical examples to illustrate the inner workings of the various compensated summation algorithms presented in Section \ref{sec:compensatedSum}.
The building block of these constructed examples is the operations $\beta^{t+k-1} \pm \beta^0$, which, $\forall k>1$, 
give the outcome $\beta^{t+k-1}$ in floating point arithmetic with the second addend dropped. 
It can be easily verified that with $x=\beta^{t+k-1}$ and $y=\pm \beta^0$, the procedure from Theorem \ref{thm:3opEFTadditionDekker} gives $z=x$ and $zz=y$; whereas with $y=\beta^{t+k-1}$ and $x=\pm \beta^0$, it gives $z=y$ and $zz=0$ instead.  
On the other hand, the procedure from Theorem \ref{thm:6opEFTadditionDekker} gives $z=\beta^{t+k-1}$ and $zz=\pm \beta^0$ in both scenarios.

The sequence $x_1 = 2^{t+1}, x_2 = -2^0, x_3 = -2^0$ can be used to illustrate the difference between the non-compensated summing procedure in Algorithm \ref{alg:recursive_summation} and the two compensated ones in Algorithms \ref{alg:recursive_summation_3op} and \ref{alg:recursive_summation_6op}.
It is clear that with Algorithm \ref{alg:recursive_summation}, the output $s$ remains $x_1$ because the subsequent addends have too smalls magnitudes to be added on.
On the other hand, Algorithms \ref{alg:recursive_summation_3op} and \ref{alg:recursive_summation_6op} produce the following results.
\begin{center}
\begin{tabular}{rp{3.75em}p{7em}p{3.75em}} 
$i=1:$ & $y=2^{t+1}$ & $s=2^{t+1}           $ & $e=0$ \\
$i=2:$ & $y= -2^{0}$ & $s=2^{t+1}           $ & $e=-2^{0}$ \\
$i=3:$ & $y= -2^{1}$ & $s=(2^t-1) \times 2^1$ & $e = 0$ \\
\end{tabular}
\end{center}
In this case, the error from step $i=2$ and the addend from step $i=3$ combined into a number ($y= -2^{1}$) that has a large enough magnitude to be added to $s$.
In fact, the calculations of $y$ and $s$ from step $i=3$ are both exact, leading to the exact answer in the final output $s$.

The difference between the effects of Algorithms \ref{alg:recursive_summation_3op} and \ref{alg:recursive_summation_6op} is easy to comprehend by comparing the conditions required for Theorems \ref{thm:3opEFTadditionDekker} and \ref{thm:6opEFTadditionDekker}; hence, its illustration is omitted.
Instead, we use the sequence $x_1 = 2^0, x_2 = 2^{t+1}, x_3 = -2^{t+1}, x_4 = -2^0$ to illustrate the difference between Algorithm \ref{alg:recursive_summation_6op}, which compensates only the operation that accumulates into the running sum $s$, and the two Algorithms \ref{alg:recursive_summation_double_6op} and \ref{alg:recursive_summation_triple_6op} that apply additional compensations.
For this sequence, Algorithm \ref{alg:recursive_summation_6op} produces the following results.
\begin{center}
\begin{tabular}{rp{4.5em}p{4.5em}p{4.5em}} 
$i=1:$ & $y= 2^0$     & $s=2^0    $ & $e=0$   \\
$i=2:$ & $y= 2^{t+1}$ & $s=2^{t+1}$ & $e=2^0$ \\
$i=3:$ & $y=-2^{t+1}$ & $s=0      $ & $e=0$   \\
$i=4:$ & $y=-2^0$     & $s=-2^0   $ & $e=0$   
\end{tabular}
\end{center}
The exact answer should obviously be zero, but we obtain $-2^0$ in the final outcome using Algorithm \ref{alg:recursive_summation_6op}. 
The issue occurred at step $i=3$ when calculating $y$: the leftover error from the previous step ($2^0$) is too small, in the relative sense, to be added to the current input ($-2^{t+1}$) and consequently lost in the sum.

In comparison, Algorithm \ref{alg:recursive_summation_double_6op} produces the following results.
\begin{center}
\begin{tabular}{rp{3.5em}p{3.5em}p{3.5em}p{3.5em}p{3.5em}} 
$i=1:$ & $t=2^0$     & $v=0$   & $w=0$   & $s=2^0$     & $e=0$   \\
$i=2:$ & $t=2^{t+1}$ & $v=2^0$ & $w=2^0$ & $s=2^{t+1}$ & $e=2^0$ \\
$i=3:$ & $t=0$       & $v=0$   & $w=2^0$ & $s=2^0$     & $e=0$   \\
$i=4:$ & $t=0$       & $v=0$   & $w=0$   & $s=0$       & $e=0$   
\end{tabular}
\end{center}
The relatively small error from step $i=2$ is kept in $w$ and added to the running sum $s$ successfully, leading to the correct answer in the final output.
Similarly, Algorithm \ref{alg:recursive_summation_triple_6op} produces the following results.
\begin{center}
\begin{tabular}{rp{4em}p{3em}p{3.5em}p{3em}p{3em}p{3.5em}p{3em}} 
$i=1:$ & $y=2^0$ & $u=0$ & $t=2^0$ & $v=0$ & $w=0$ & $s=2^0$ & $e=0$ \\
$i=2:$ & $y=2^{t+1}$ & $u=0$ & $t=2^{t+1}$ & $v=2^0$ & $w=2^0$ & $s=2^{t+1}$ & $e=2^0$ \\
$i=3:$ & $y=-2^{t+1}$ & $u=2^0$ & $t=0$ & $v=0$ & $w=2^0$ & $s=2^0$ & $e=0$ \\
$i=4:$ & $y=-2^0$ & $u=0$ & $t=0$ & $v=0$ & $w=0$ & $s=0$ & $e=0$   
\end{tabular}
\end{center}
The relatively small error from step $i=2$ is kept in $u$ and then $w$ and added to the running sum $s$ successfully, leading to the correct answer in the final output.

\subsection{Accumulation}
\label{example:accumulation}
An accumulation operation of the form $s = \sum_{i=1}^{n} x_i$ is often needed in various computational workloads. Common examples include Monte Carlo simulations and the collection of statistics.
%
Through private communications, accuracy of this accumulation operation has been raised as a major concern for Monte Carlo applications such as QMCPACK \cite{kim2018qmcpack} and OpenMC \cite{romano2015openmc} on hardware that do not offer 
enough 
native double precision support.
In this example, we draw samples of random numbers $\{x_i\}_{i=1}^n$ and experiment with the various algorithms from Section \ref{sec:compensatedSum} to calculate their sum.
For the experiments in this example, the random numbers are drawn as uniformly distributed bit sequences 
with certain exponent patterns filtered out, including the all-one exponent that corresponds to \texttt{Inf} and \texttt{NaN} and a few additional largest exponents to avoid overflow.

Relative errors corresponding to Algorithm \ref{alg:recursive_summation} (no compensation), Algorithm \ref{alg:recursive_summation_6op} (6op compensation), and Algorithm \ref{alg:recursive_summation_double_6op} (double 6op compensation) are summarized in Table \ref{tbl:accumulation_fp32_relative_error} and Table \ref{tbl:accumulation_fp64_relative_error} for single and double precision calculations, respectively. 
It can be readily observed that the two compensated algorithms deliver better accuracy. In particular, when compared to Algorithm \ref{alg:recursive_summation} (no compensation), Algorithm \ref{alg:recursive_summation_double_6op} (double 6op compensation) delivers the accuracy as if a data format with twice the amount of mantissa bits was used.

\newcommand{\smalltt}[1]{{\small\texttt{#1}}}

\begin{table}[H]
\centering
\caption{Relative errors for calculating $s = \sum_{i=1}^{n} x_i$ using Algorithm \ref{alg:recursive_summation}, Algorithm \ref{alg:recursive_summation_6op}, and Algorithm \ref{alg:recursive_summation_double_6op} at \textit{single} precision. Bit sequences of 32 bits are drawn uniformly; sequences with exponents equal to or above \texttt{1111 0111} are discarded to avoid overflow.}
\label{tbl:accumulation_fp32_relative_error}
\begin{tabular}{cccc} 
\hline
& no compensation & 6op & double 6op \\
\hline\\[-1em]
$2^{2} $ & \texttt{2.7232E-08} & \texttt{5.8741E-19} & \texttt{5.8741E-19} \\
$2^{4} $ & \texttt{8.0680E-09} & \texttt{7.5526E-15} & \texttt{1.3645E-16} \\
$2^{6} $ & \texttt{3.6740E-08} & \texttt{1.2816E-08} & \texttt{3.3204E-16} \\
$2^{8} $ & \texttt{1.0608E-07} & \texttt{3.4012E-09} & \texttt{4.4701E-15} \\
$2^{10}$ & \texttt{5.0575E-07} & \texttt{5.0312E-08} & \texttt{2.4471E-14} \\
$2^{12}$ & \texttt{1.6722E-07} & \texttt{9.2296E-09} & \texttt{1.3122E-14} \\
$2^{14}$ & \texttt{5.4611E-07} & \texttt{3.0616E-10} & \texttt{4.0441E-14} \\
$2^{16}$ & \texttt{4.9748E-06} & \texttt{2.2910E-08} & \texttt{1.8646E-14} \\
$2^{18}$ & \texttt{1.7089E-06} & \texttt{1.9204E-09} & \texttt{1.3608E-14} \\
$2^{20}$ & \texttt{5.7614E-06} & \texttt{1.3258E-08} & \texttt{4.2820E-13} \\
\hline
\end{tabular}
\end{table}

\vspace{-1em}
\begin{table}[H]
\centering
\caption{Relative errors for calculating $s = \sum_{i=1}^{n} x_i$ using Algorithm \ref{alg:recursive_summation}, Algorithm \ref{alg:recursive_summation_6op}, and Algorithm \ref{alg:recursive_summation_double_6op} at \textit{double} precision. Bit sequences of 64 bits are drawn uniformly; sequences with exponents equal to or above \texttt{1111 1010} are discarded to avoid overflow.}
\label{tbl:accumulation_fp64_relative_error}
\begin{tabular}{cccc} 
\hline
& no compensation & 6op & double 6op \\
\hline\\[-1em]
$2^{2} $ & \texttt{3.0810E-17} & \texttt{0.0000E+00} & \texttt{0.0000E+00} \\
$2^{4} $ & \texttt{4.7274E-53} & \texttt{1.3203E-69} & \texttt{1.3203E-69} \\
$2^{6} $ & \texttt{4.4697E-17} & \texttt{5.7065E-20} & \texttt{9.3778E-34} \\
$2^{8} $ & \texttt{6.9736E-17} & \texttt{3.9433E-18} & \texttt{1.1858E-32} \\
$2^{10}$ & \texttt{1.6921E-16} & \texttt{2.1145E-18} & \texttt{3.7625E-33} \\
$2^{12}$ & \texttt{1.6312E-15} & \texttt{2.2531E-17} & \texttt{4.7920E-32} \\
$2^{14}$ & \texttt{2.2156E-16} & \texttt{1.5226E-16} & \texttt{2.9948E-33} \\
$2^{16}$ & \texttt{8.2921E-16} & \texttt{4.3061E-18} & \texttt{2.5255E-32} \\
$2^{18}$ & \texttt{1.4821E-15} & \texttt{1.6615E-17} & \texttt{1.2419E-31} \\
$2^{20}$ & \texttt{8.4132E-15} & \texttt{2.7501E-17} & \texttt{1.3656E-30} \\
\hline
\end{tabular}
\end{table}

\begin{remark}
\label{rmk:reference_sum_in_place_for_true_sum}
The reference sum used in the above error calculation is obtained by carrying out the summation using Algorithm \ref{alg:recursive_summation_double_6op} (double 6op compensation) with the extended 80-bit data format available on x86 architecture.
This 80-bit data format allocates 64 bits for the mantissa, hence having unit round-off $\epsilon_{80} = 2^{-64}$. In comparison, the single and double precision data formats have unit round-off $\epsilon_{32} = 2^{-24}$ and $\epsilon_{64} = 2^{-53}$, respectively.
Although the reference sum, denoted hereafter as $\mathcal{S}_\text{ref}$\footnotemark, 
is different from the true sum $\tilde{s}$, we have from triangle inequality that
$\vert \mathcal{S} - \tilde{s} \vert 
\leq 
\vert \mathcal{S} - \mathcal{S}_\text{ref} \vert + \vert \mathcal{S}_\text{ref} - \tilde{s} \vert$.
Since $\vert \mathcal{S}_\text{ref} - \tilde{s} \vert$ is calculated using a very accurate algorithm with the extended 80-bit precision, we can expect it to amount to a small perturbation only in relation to $\vert \mathcal{S} - \tilde{s} \vert$ and $\vert \mathcal{S} - \mathcal{S}_\text{ref} \vert$.
To elaborate, $\vert \mathcal{S}_\text{ref} - \tilde{s} \vert$ is bounded by 
$\epsilon_{80}^2 \cdot (2n-1) \sum_{k=1}^n \vert x_k \vert$ according to 
\eqref{eqn:recursive_summation_double_6op_sn+en_error_bound_leading_order}.
Among all the experiments from Tables \ref{tbl:accumulation_fp32_relative_error} and \ref{tbl:accumulation_fp64_relative_error}, the most accurate sum calculation uses Algorithm \ref{alg:recursive_summation_double_6op} (double 6op compensation) with double precision, which leads to a similar bound $\epsilon_{64}^2 \cdot (2n-1) \sum_{k=1}^n \vert x_k \vert$ on $\vert \mathcal{S} - \tilde{s} \vert$, a factor of $2^{-22} \sim 10^{-7}$ greater than that of $\vert \mathcal{S}_\text{ref} - \tilde{s} \vert$. 
Therefore, it is reasonable to consider the reference sum as a good substitute for the true sum.
Indeed, numerical observation confirms that, with the exception of the double 6op column in Table 2 when $n$ is very small ($n \leq 2^4)$, even the very pessimistic bound on $\vert \mathcal{S}_\text{ref} - \tilde{s} \vert$ is small enough compared to $\vert \mathcal{S} - \mathcal{S}_\text{ref} \vert$ to not affect the exponents reported in Tables \ref{tbl:accumulation_fp32_relative_error} and \ref{tbl:accumulation_fp64_relative_error}.

\hfill {\bfseries{\itshape End of Remark} \ref*{rmk:reference_sum_in_place_for_true_sum}.}
\end{remark}

\footnotetext{
We use the symobl $\mathcal{S}$ here to indicate that the calculated sum is possibly represented by two numbers $s$ and $e$ as in the cases of Algorithms \ref{alg:recursive_summation_6op} and \ref{alg:recursive_summation_double_6op}.
}

The theoretical bounds derived for $\frac{\left\vert s + e - \mathcal{S}_{ref} \right\vert}{\sum_{i=1}^{n} \vert x_i\vert}$ are also of interest. 
Table \ref{tbl:accumulation_fp32_bounds} collects these bounds for Algorithms \ref{alg:recursive_summation}, \ref{alg:recursive_summation_6op}, and \ref{alg:recursive_summation_double_6op} at single precision based on \eqref{forwardErrorBoundRecursiveSummation}, \eqref{eqn:recursive_summation_6op_sn+en_error_bound_c}, and \eqref{eqn:recursive_summation_double_6op_sn+en_error_bound_c}, respectively, along with the actual observed values;
Table \ref{tbl:accumulation_fp64_bounds} does the same for these algorithms at double precision.
If the observed value exceeds the bound, it indicates that some unintended error has occurred during the calculation, perhaps related to hardware defects, software bugs, or data corruption.
Therefore, the algorithms presented in Section \ref{sec:compensatedSum}, along with the bounds given therein, can be leveraged to validate computer systems. 
%
Obviously, the smaller the bound, the better it is at detecting unintended errors.
%
To compare, the double 6op algorithm applied to adding $2^{20}$ random numbers at single and double precision has bounds on the scale of $10^{-9}$ and $10^{-26}$, respectively, much smaller than the $10^{-2}$ and $10^{-10}$ offered by the simple algorithm with no compensation, hence a more effective algorithm for the purpose of detecting unintended errors.

\vspace{-1em}
\begin{table}[H]
\centering
\caption{Bounds for $\frac{\left\vert s + e - \mathcal{S}_{ref} \right\vert}{\sum_{i=1}^{n} \vert x_i\vert}$ using Algorithm \ref{alg:recursive_summation}, Algorithm \ref{alg:recursive_summation_6op}, and Algorithm \ref{alg:recursive_summation_double_6op} at \textit{single} precision. 
}
\label{tbl:accumulation_fp32_bounds}
\begin{tabular}{ccccccc} 
\hline
    & \multicolumn{2}{c}{no compensation} & \multicolumn{2}{c}{6op} & \multicolumn{2}{c}{double 6op} \\
$n$ & derived & observed & derived & observed & derived & observed \\
\hline\\[-1em]
$2^{2} $ & \texttt{2.38E-07} & \texttt{2.72E-08} & \texttt{5.96E-08} & \texttt{5.87E-19} & \texttt{2.49E-14} & \texttt{5.87E-19} \\
$2^{4} $ & \texttt{9.54E-07} & \texttt{8.07E-09} & \texttt{5.96E-08} & \texttt{7.55E-15} & \texttt{1.10E-13} & \texttt{1.36E-16} \\
$2^{6} $ & \texttt{3.81E-06} & \texttt{3.67E-08} & \texttt{5.96E-08} & \texttt{1.28E-08} & \texttt{4.51E-13} & \texttt{3.32E-16} \\
$2^{8} $ & \texttt{1.53E-05} & \texttt{1.06E-07} & \texttt{5.96E-08} & \texttt{3.39E-09} & \texttt{1.82E-12} & \texttt{4.45E-15} \\
$2^{10}$ & \texttt{6.10E-05} & \texttt{5.79E-08} & \texttt{5.96E-08} & \texttt{5.76E-09} & \texttt{7.27E-12} & \texttt{2.80E-15} \\
$2^{12}$ & \texttt{2.44E-04} & \texttt{5.65E-08} & \texttt{5.96E-08} & \texttt{3.12E-09} & \texttt{2.91E-11} & \texttt{4.44E-15} \\
$2^{14}$ & \texttt{9.78E-04} & \texttt{8.29E-08} & \texttt{5.97E-08} & \texttt{4.65E-11} & \texttt{1.16E-10} & \texttt{6.14E-15} \\
$2^{16}$ & \texttt{3.92E-03} & \texttt{1.15E-07} & \texttt{5.98E-08} & \texttt{5.30E-10} & \texttt{4.66E-10} & \texttt{4.32E-16} \\
$2^{18}$ & \texttt{1.59E-02} & \texttt{2.81E-08} & \texttt{6.05E-08} & \texttt{3.16E-11} & \texttt{1.86E-09} & \texttt{2.24E-16} \\
$2^{20}$ & \texttt{6.67E-02} & \texttt{2.46E-08} & \texttt{6.33E-08} & \texttt{5.66E-11} & \texttt{7.45E-09} & \texttt{1.83E-15} \\
\hline
\end{tabular}
\end{table}

\vspace{-1.5em}
\begin{table}[H]
\centering
\caption{Bounds for $\frac{\left\vert s + e - \mathcal{S}_{ref} \right\vert}{\sum_{i=1}^{n} \vert x_i\vert}$ using Algorithm \ref{alg:recursive_summation}, Algorithm \ref{alg:recursive_summation_6op}, and Algorithm \ref{alg:recursive_summation_double_6op} at \textit{double} precision. 
}
\label{tbl:accumulation_fp64_bounds}
\begin{tabular}{ccccccc} 
\hline
    & \multicolumn{2}{c}{no compensation} & \multicolumn{2}{c}{6op} & \multicolumn{2}{c}{double 6op} \\
$n$ & derived & observed & derived & observed & derived & observed \\
\hline\\[-1em]
$2^{2} $ & \texttt{4.44E-16} & \texttt{3.08E-17} & \texttt{1.11E-16} & \texttt{0.00E+00} & \texttt{8.63E-32} & \texttt{0.00E+00} \\
$2^{4} $ & \texttt{1.78E-15} & \texttt{4.73E-53} & \texttt{1.11E-16} & \texttt{1.32E-69} & \texttt{3.82E-31} & \texttt{1.32E-69} \\
$2^{6} $ & \texttt{7.11E-15} & \texttt{4.46E-17} & \texttt{1.11E-16} & \texttt{5.69E-20} & \texttt{1.57E-30} & \texttt{9.36E-34} \\
$2^{8} $ & \texttt{2.84E-14} & \texttt{6.97E-17} & \texttt{1.11E-16} & \texttt{3.94E-18} & \texttt{6.30E-30} & \texttt{1.19E-32} \\
$2^{10}$ & \texttt{1.14E-13} & \texttt{1.57E-16} & \texttt{1.11E-16} & \texttt{1.96E-18} & \texttt{2.52E-29} & \texttt{3.49E-33} \\
$2^{12}$ & \texttt{4.55E-13} & \texttt{3.73E-16} & \texttt{1.11E-16} & \texttt{5.16E-18} & \texttt{1.01E-28} & \texttt{1.10E-32} \\
$2^{14}$ & \texttt{1.82E-12} & \texttt{2.19E-17} & \texttt{1.11E-16} & \texttt{1.51E-17} & \texttt{4.04E-28} & \texttt{2.96E-34} \\
$2^{16}$ & \texttt{7.28E-12} & \texttt{1.78E-17} & \texttt{1.11E-16} & \texttt{9.22E-20} & \texttt{1.62E-27} & \texttt{5.41E-34} \\
$2^{18}$ & \texttt{2.91E-11} & \texttt{2.66E-17} & \texttt{1.11E-16} & \texttt{2.99E-19} & \texttt{6.46E-27} & \texttt{2.23E-33} \\
$2^{20}$ & \texttt{1.16E-10} & \texttt{3.58E-17} & \texttt{1.11E-16} & \texttt{1.17E-19} & \texttt{2.58E-26} & \texttt{5.81E-33} \\
\hline
\end{tabular}
\end{table}

\subsection{Dynamical system}
Simulation of dynamical systems is a natural application for compensated summation algorithms, where the solution update can be viewed as a recursive summation with the addends being generated on the fly. 
Compensated summation algorithms can be particularly beneficial for dynamical systems that are sensitive to small perturbations or require long simulation steps such as those arising in celestial mechanics.

In this example, we experiment with a classical three-body problem with a configuration of initial states that lead to periodic orbits resembling the figure-eight shape \cite{chenciner2000remarkable}.
The problem can be formulated as the following ODE system:
\begin{equation}
\label{eqn:3body_figure8}
\left\{ 
\arraycolsep=2pt
\begin{array}{lcr}
\displaystyle \dot{R_i} & = & \displaystyle V_i \, ; \\[0.5ex]
\displaystyle \dot{V_i} & = & \displaystyle A_i \, ,
\end{array}
\right.
\end{equation}
where $i$ ranges from $1$ to $3$ and identifies one of the three interacting bodies; $R_i$, $V_i$, and $A_i$ are vectors with two components each, representing the position, velocity, and acceleration, respectively, of the $i$th body in the orbiting plane.
The acceleration $A_i$ is calculated based on the following formula:
\begin{equation}
A_i = \sum_{j\neq i} \frac{R_j - R_i}{\|R_j - R_i\|^3} \, ,
\end{equation}
where $j$ also goes through $1$ to $3$.

The system \eqref{eqn:3body_figure8} is integrated using the symplectic Euler method as follows:
\begin{equation}
\label{eqn:3body_figure8_semi-implicit-Euler}
\arraycolsep=2pt
\begin{array}{lcl}
R_i^{(k+1)} &=& R_i^{(k)} + h V_i^{(k)}   \, ; \\[0.5ex]
V_i^{(k+1)} &=& V_i^{(k)} + h A_i^{(k+1)} \, ,
\end{array}
\end{equation}
where $k$ indicates the time step and $h$ is the step size. 
The simulation is performed at fp32 with $h = 2^{-11}$ for 10000 periods, with each period being approximately 6.3259.
Results with Algorithm \ref{alg:recursive_summation} (simple recursive summation without compensation) applied to \eqref{eqn:3body_figure8_semi-implicit-Euler} are shown in Figure \ref{fig:3body_figure8_semi-implicit-Euler:10000:0op}, with the plots illustrating the orbit of the first body at around the $0$th, $2000$th, $\cdots$, $8000$th period, respectively.
As can be observed from these plots, the simulation quickly deviates from the figure-eight shaped orbit that the bodies are supposed to follow.

\newcommand{\figurePath}{./code/Euler/plot/fp32/h_11/periods_10000/segments_10/}

\begin{figure}[H]
\centering
\newcount\myloopiter
\myloopiter=0
\loop
    \begin{subfigure}[b]{0.18\textwidth}
        \centering
        \includegraphics[width=\textwidth]{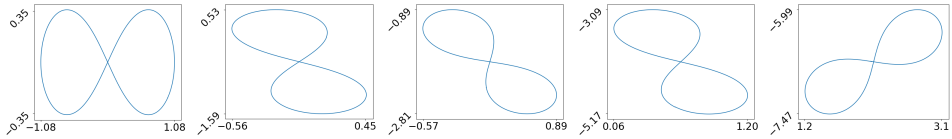}
    \end{subfigure}%
\ifnum\myloopiter<8
    \advance\myloopiter by 2
\repeat
\caption{Simulation of the three-body problem with an initial configuration that should lead to periodic orbits resembling a figure-eight shape. The simulation is performed at fp32 with the simple recursive summation for the solution update. 
Orbit of the first body at around the $0$th, $2000$th, $4000$th, $6000$th, and $8000$th period is shown in the plots, respectively.
The accumulation of round-off errors lead to the deviation from the orbit it should follow as illustrated at the top left plot.}
\label{fig:3body_figure8_semi-implicit-Euler:10000:0op}
\end{figure}

In comparison, results with Algorithm \ref{alg:recursive_summation_6op} (recursive summation with 6op compensation) and Algorithm \ref{alg:recursive_summation_double_6op} (recursive summation with double 6op compensation) applied to \eqref{eqn:3body_figure8_semi-implicit-Euler} are shown in Figures \ref{fig:3body_figure8_semi-implicit-Euler:10000:6op} and \ref{fig:3body_figure8_semi-implicit-Euler:10000:12op}, respectively.
As can be observed from the plots therein, the figure-eight shaped orbit is mostly maintained after 10000 periods of simulation, thanks to the reducation in the accumulation of round-off errors related to the solution update.

\begin{figure}[H]
\centering
\newcount\myloopiter
\myloopiter=0
\loop
    \begin{subfigure}[b]{0.18\textwidth}
        \centering
        \includegraphics[width=\textwidth]{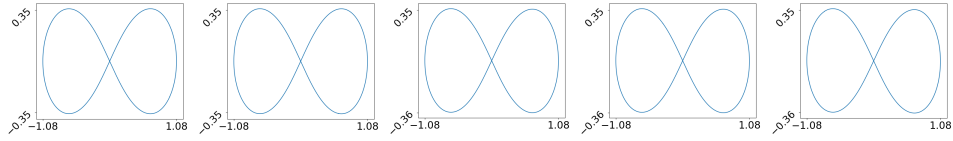}
    \end{subfigure}%
\ifnum\myloopiter<8
    \advance\myloopiter by 2
\repeat
\caption{Same settings as for Figure \ref{fig:3body_figure8_semi-implicit-Euler:10000:0op} except that the 6op compensated recursive summation is used for the solution update.
Thanks to the reduction in the round-off error accumulation, the figure-eight shaped orbit is mostly maintained after 10000 periods of simulation.}
\label{fig:3body_figure8_semi-implicit-Euler:10000:6op}
\end{figure}

\begin{figure}[H]
\centering
\newcount\myloopiter
\myloopiter=0
\loop
    \begin{subfigure}[b]{0.18\textwidth}
        \centering
        \includegraphics[width=\textwidth]{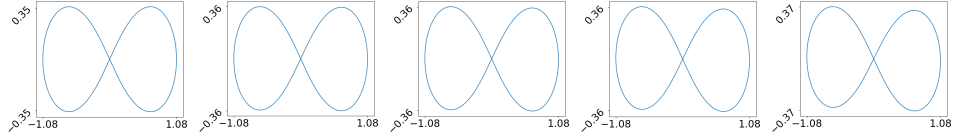}
    \end{subfigure}%
\ifnum\myloopiter<8
    \advance\myloopiter by 2
\repeat
\caption{Same settings as for Figure \ref{fig:3body_figure8_semi-implicit-Euler:10000:0op} except that the double 6op compensated recursive summation is used for the solution update.
Thanks to the reduction in the round-off error accumulation, the figure-eight shaped orbit is mostly maintained after 10000 periods of simulation.}
\label{fig:3body_figure8_semi-implicit-Euler:10000:12op}
\end{figure}

We note here that in the setting presented here, we should not expect the double 6op compensated Algorithm \ref{alg:recursive_summation_double_6op} to perform much better, if any better, than the 6op compensated Algorithm \ref{alg:recursive_summation_6op}.
The reason is evidenced in the error bounds given in \eqref{eqn:recursive_summation_6op_sn+en_error_bound_c} and \eqref{eqn:recursive_summation_double_6op_sn+en_error_bound_c}.
To first-order approximation, the accumulation error is bounded by an $\epsilon$ term and an $n\epsilon^2$ term for Algorithm \ref{alg:recursive_summation_6op}, whereas it is bounded by an $n\epsilon^2$ term for Algorithm \ref{alg:recursive_summation_double_6op}.
The absence of the $\epsilon$ term is what makes Algorithm \ref{alg:recursive_summation_double_6op} superior when the accumulation operation is considered by itself.
However, once another error on the order of $\epsilon$ is introduced in the broader context, the two algorithms will be on an equal footing. 
In the case of ODE simulations, this other error comes from the evaluation and assignment of the new addends, i.e., $h V_i^{(k)}$ and $h A_i^{(k)}$ in \eqref{eqn:3body_figure8_semi-implicit-Euler}, which is on the order of $\epsilon$.
The derived error bounds, along with the numerical experiments, indicate that the error from the accumulation operation is no longer the bottleneck and further improvement for the overall simulation can only be achieved if other error sources are addressed.

Additionally, even with the compensation, the simulation will eventually drift away from the figure-eight shaped orbit, as can be observed in Figures \ref{fig:3body_figure8_semi-implicit-Euler:50000:6op} and \ref{fig:3body_figure8_semi-implicit-Euler:50000:12op} as the correspondences of \ref{fig:3body_figure8_semi-implicit-Euler:10000:6op} and \ref{fig:3body_figure8_semi-implicit-Euler:10000:12op}, respectively, but for a longer simulation time of 50000 periods.

\renewcommand{\figurePath}{./code/Euler/plot/fp32/h_11/periods_50000/segments_10/}

\begin{figure}[H]
\centering
\newcount\myloopiter
\myloopiter=0
\loop
    \begin{subfigure}[b]{0.18\textwidth}
        \centering
        \includegraphics[width=\textwidth]{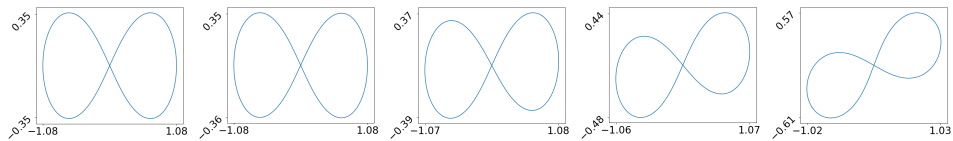}
    \end{subfigure}%
\ifnum\myloopiter<8
    \advance\myloopiter by 2
\repeat
\caption{Same setting as for Figure \ref{fig:3body_figure8_semi-implicit-Euler:10000:6op} except that the simulation is performed for a longer time of 50000 periods with a 10000-period increment from plot to plot. The simulated orbit eventually drifts away from the figure-eight shape even with the compensation.}
\label{fig:3body_figure8_semi-implicit-Euler:50000:6op}
\end{figure}

\begin{figure}[H]
\centering
\newcount\myloopiter
\myloopiter=0
\loop
    \begin{subfigure}[b]{0.18\textwidth}
        \centering
        \includegraphics[width=\textwidth]{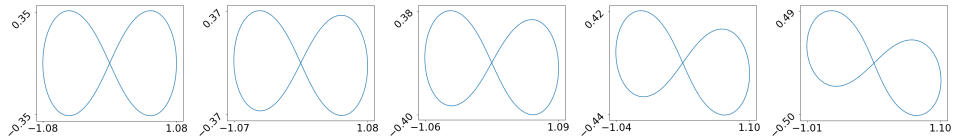}
    \end{subfigure}%
\ifnum\myloopiter<8
    \advance\myloopiter by 2
\repeat
\caption{Same setting as for Figure \ref{fig:3body_figure8_semi-implicit-Euler:10000:12op} except that the simulation is performed for a longer time of 50000 periods with a 10000-period increment from plot to plot. The simulated orbit eventually drifts away from the figure-eight shape even with the compensation.}
\label{fig:3body_figure8_semi-implicit-Euler:50000:12op}
\end{figure}

\section{Conclusions}
\label{sec:conclusions}

Analysis of various compensated recursive summation algorithms is provided using the error free transformation results derived using Dekker's floating point number system. 
Our analysis differs from 
previous efforts in that it provides a complete description of the error behavior instead of tracking the leading order term only. 
Recursive relations that reveal the error propagation pattern are provided as part of the analysis, which can be used to identify the operation that limits the accuracy of the overall algorithm.
This information is used to design more nuanced compensated summation algorithms that remove the accuracy bottleneck.
Numerical evidence is provided to illustrate the efficacy of these algorithms, verifying empirically that in some cases, algorithmic improvement can achieve an accuracy as if a datatype with twice the amount of mantissa bits has been used.

\section{Acknowledgements}
This research used resources of the Argonne Leadership Computing Facility, a U.S. Department of Energy (DOE) Office of Science user facility at Argonne National Laboratory and is based on research supported by the U.S. DOE Office of Science-Advanced Scientific Computing Research Program, under Contract No. DE-AC02-06CH11357.

\bibliographystyle{./bst_base/abbrv} 
\bibliography{refs} 

\end{document}